\newif\ifHYPER\global\HYPERtrue
\newif\ifINTERNAL\global\INTERNALfalse
\definecolor{ks-green}{rgb}{0.0,0.7,0.0}
\definecolor{ks-red}{rgb}{0.7,0.0,0.0}
\definecolor{ks-blue}{rgb}{0.0,0.0,0.7}
\numberwithin{equation}{section}
\theoremstyle{plain}
\newtheorem{theorem}[equation]{Theorem}
\newtheorem{lemma}[equation]{Lemma}
\newtheorem{corollary}[equation]{Corollary}
\newtheorem{proposition}[equation]{Proposition}
\theoremstyle{definition}
\newtheorem{definition}[equation]{Definition}
\newtheorem{Example}[equation]{Example}
\newenvironment{example}{\emph{Example.}}{}
\newenvironment{remark}{\emph{Remark.}}{}
\newenvironment{notation}{\emph{Notation.}}{}
\newcolumntype{d}[1]{D{.}{.}{#1}}
\newcommand{\tsfrac}[2]{\textstyle{\frac{#1}{#2}}}
\newcommand{\mtxstrut}{\rule[-0.5ex]{0pt}{2.8ex}}
\newcommand{\tabstrut}{\rule[-0.5ex]{0pt}{3.2ex}}
\def\trp{^{\!\top}}
\def\inv{^{-1}}
\def\eps{\varepsilon}
\def\dd{\mathrm{d}}     
\def\ncpd{\partial}     
\newcommand\numN{\mathbb{N}}   
\newcommand\numQ{\mathbb{Q}}   
\newcommand\numR{\mathbb{R}}   
\newcommand\numC{\mathbb{C}}   
\newcommand{\fronorm}[1]{\lVert #1 \rVert_\text{F}}
\newcommand{\alphabet}[1]{\mathcal{#1}}
\newcommand{\freeALG}[2]{#1\langle #2\rangle}
\newcommand{\freeFLD}[2]{#1(\!\langle #2\rangle\!)}
\newcommand{\field}[1]{\mathbb{#1}}
\newcommand{\als}[1]{\mathcal{#1}}
\newcommand{\aclo}[1]{\overline{#1}}
\newcommand{\Grad}{\nabla}
\DeclareMathOperator{\rank}{rank}
\DeclareMathOperator{\linsp}{span}
\begin{document}
\selectlanguage{english}
\title{Free (rational) Derivation}
\author{Konrad Schrempf%
  \footnote{Contact: math@versibilitas.at (Konrad Schrempf),
    \url{https://orcid.org/0000-0001-8509-009X},
    Austrian Academy of Sciences, Acoustics Research Institute,
    Wohllebengasse 12--14, 1040 Vienna, Austria.
    }
    \hspace{0.2em}\href{https://orcid.org/0000-0001-8509-009X}{%
    \includegraphics[height=10pt]{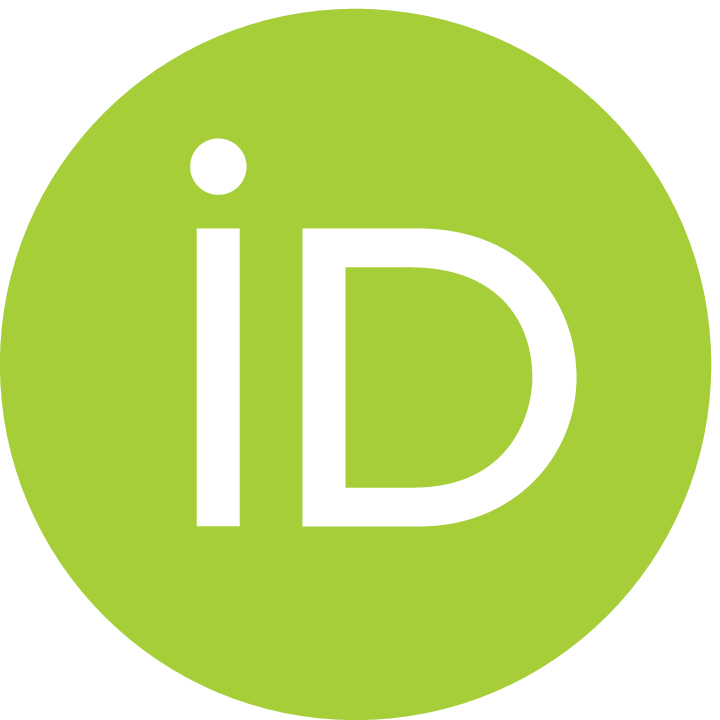}}
  }

\maketitle

\begin{abstract}
By representing elements in \emph{free fields} (over a \emph{commutative}
field and a \emph{finite} alphabet) using Cohn and Reutenauer's
\emph{linear representations}, we provide an algorithmic
construction for the (partial) non-commutative (or Hausdorff-)
\emph{derivative}
and show how it can be applied to the non-commutative version
of the Newton iteration to find roots of matrix-valued
rational equations.
\end{abstract}

\medskip
\emph{Keywords and 2020 Mathematics Subject Classification.}
Hausdorff derivative, free associative algebra, free field, 
minimal linear representation, admissible linear system,
free fractions, chain rule, Newton iteration;
Primary 16K40, 16S85;
Secondary 68W30, 46G05

\section*{Introduction}

Working symbolically with matrices requires \emph{non-commuting}
variables and thus non-com\-mu\-tative (nc) rational expressions.
Although the (algebraic) construction of \emph{free fields},
that is, universal fields of fractions of \emph{free associative algebras},
is available due to Paul M.~Cohn since 1970
\cite[Chapter~7]{Cohn2006a}
, its practical application in terms of \emph{free fractions}
\cite{Schrempf2018c2}
\ ---building directly on Cohn and Reutenauer's \emph{linear representations}
\cite{Cohn1999a}
--- in computer algebra systems is only at the very beginning.

The main difficulty for arithmetic
---or rather \emph{lexetic} from the non-existing Greek word
\selectlanguage{greek}%
lexhtikos \selectlanguage{english}%
(from \selectlanguage{greek}%
lexis \selectlanguage{english}%
for \emph{word}) as analogon to \selectlanguage{greek}%
arijmhtikos \selectlanguage{english}%
(from \selectlanguage{greek}%
arijmos \selectlanguage{english}%
for \emph{number})---
was the construction of \emph{minimal} linear representations
\cite{Schrempf2018a9}
, that is, the \emph{normal form} of Cohn and Reutenauer
\cite{Cohn1994a}
.

Here we will show that free fractions also provide a
framework for ``free'' derivation, in particular of nc polynomials.
The construction we provide generalizes the univariate (commutative)
case we are so much used to, for example
\begin{displaymath}
f = f(x) = x^3 + 4x^2+3x+5\quad\text{with}\quad
f' = \tsfrac{\dd}{\dd x} f(x) = 3x^2 + 8x+3.
\end{displaymath}
The coefficients are from a \emph{commutative} field
$\field{K}$ (for example the rational $\numQ$, the real $\numR$, or
the complex number field $\numC$),
the (non-commuting) variables from a (finite)
alphabet $\alphabet{X}$, for example $\alphabet{X} = \{ x, y, z \}$.

\medskip
For simplicity we focus here (in this motivation)
on the \emph{free associative algebra}
$R := \freeALG{\field{K}}{\alphabet{X}}$,
aka ``algebra of nc polynomials'', and recall the
properties of a (partial) \emph{derivation} $\ncpd_x : R \to R$
(for a fixed $x \in \alphabet{X}$), namely
\begin{itemize}
\item $\ncpd_x (\alpha) = 0$ for $\alpha \in \field{K}$, and more general,
  $\ncpd_x(g) =0$ for $g \in \freeALG{\field{K}}{\alphabet{X}\!\setminus\! \{x\}}$,
\item $\ncpd_x (x) = 1$, and
\item $\ncpd_x (fg) = \ncpd_x(f)\,g + f\, \ncpd_x(g)$.
\end{itemize}
In \cite{Rota1980a}
\ this is called \emph{Hausdorff derivative}.
For a more general (module theoretic) context we refer to 
\cite[Section~2.7]{Cohn2003b}
\ or \cite{Bergman1975b}
.

\medskip
Before we continue we should clarify the wording: To avoid
confusion we refer to the \emph{linear operator} $\ncpd_x$ (for
some $x \in \alphabet{X}$) as \emph{free partial derivation} (or just
\emph{derivation}) and call $\ncpd_x f \in R$ the (free partial)
\emph{derivative} of $f \in R$,
sometimes denoted also as $f_x$ or $f'$ depending on the context.

In the commutative, we usually do not distinguish too much between
algebraic and analytic concepts. But in the (free) non-commutative
setting, analysis is quite subtle \cite{Kaliuzhnyi2014a}
. There are even concepts like ``matrix convexity''
and symbolic procedures to determine (nc) convexity
\cite{Camino2003a}
. However, a systematic treatment of the underlying algebraic
tools was not available so far. We are going to close this gap in the following.

\medskip
After a brief description of the setup
(in particular that of \emph{linear representations} of elements
in free fields) in Section~\ref{sec:fd.intro},
we develop the formalism for \emph{free derivations}
in Section~\ref{sec:fd.der} with the main
result, Theorem~\ref{thr:fd.ncpd}.
To be able to state a (partial) ``free'' chain rule
(in Proposition~\ref{pro:fd.chainrule})
we derive a language for the
\emph{free composition} (and illustrate
how to ``reverse'' it) in Section~\ref{sec:fd.comp}.
And finally, in Section~\ref{sec:fd.newton},
we show how to develop a meta algorithm
``nc Newton'' to find matrix-valued roots
of a non-commutative rational equation.

\medskip
\begin{notation}
The set of the natural numbers is denoted by $\numN = \{ 1,2,\ldots \}$,
that including zero by $\numN_0$.
Zero entries in matrices are usually replaced by (lower) dots
to emphasize the structure of the non-zero entries
unless they result from transformations where there
were possibly non-zero entries before.
We denote by $I_n$ the identity matrix (of size $n$)
respectively $I$ if the size is clear from the context.
By $v\trp$ we denote the transpose of a vector $v$.
\end{notation}

\section{Getting Started}\label{sec:fd.intro}

We represent elements (in free fields) by
\emph{admissible linear systems} (Definition~\ref{def:fd.als}),
which are just a special form of \emph{linear representations}
(Definition~\ref{def:fd.rep}) and ``general'' \emph{admissible systems}
\cite[Section~7.1]{Cohn2006a}
. Rational operations
(scalar multiplication, addition, multiplication, inverse) can be
easily formulated in terms of linear representations
\cite[Section~1]{Cohn1999a}
. For the formulation on the level of admissible linear systems
and the ``minimal'' inverse we refer to
\cite[Proposition~1.13]{Schrempf2017a9}
\ resp.~\cite[Theorem~4.13]{Schrempf2017a9}
.

Let $\field{K}$ be a \emph{commutative} field,
$\aclo{\field{K}}$ its algebraic closure and
$\alphabet{X} = \{ x_1, x_2, \ldots, x_d\}$ be a \emph{finite} (non-empty) alphabet.
$\freeALG{\field{K}}{\alphabet{X}}$ denotes the \emph{free associative
algebra} (or \emph{free $\field{K}$-algebra})
and $\field{F} = \freeFLD{\field{K}}{\alphabet{\alphabet{X}}}$ its \emph{universal field of
fractions} (or ``free field'') 
\cite{Cohn1995a}
,
\cite{Cohn1999a}
. An element in $\freeALG{\field{K}}{\alphabet{X}}$ is called (non-commutative or nc)
\emph{polynomial}.
In our examples the alphabet is usually $\alphabet{X}=\{x,y,z\}$.
Including the algebra of \emph{nc rational series}
\cite{Berstel2011a}
\ we have the following chain of inclusions:
\begin{displaymath}
\field{K}\subsetneq \freeALG{\field{K}}{\alphabet{X}}
  \subsetneq \field{K}^{\text{rat}}\langle\!\langle \alphabet{X}\rangle\!\rangle
  \subsetneq \freeFLD{\field{K}}{\alphabet{X}} =: \field{F}.
\end{displaymath}

\begin{definition}[Inner Rank, Full Matrix \protect{%
\cite[Section~0.1]{Cohn2006a}
}, \cite{Cohn1999a}
]\label{def:fd.full}
Given a matrix $A \in \freeALG{\field{K}}{\alphabet{X}}^{n \times n}$,
the \emph{inner rank}
of $A$ is the smallest number $k\in \numN$
such that there exists a factorization $A = C D$ with
$C \in \freeALG{\field{K}}{\alphabet{X}}^{n \times k}$ and
$D \in \freeALG{\field{K}}{\alphabet{X}}^{k \times n}$.
The matrix $A$ is called \emph{full} if $k = n$,
\emph{non-full} otherwise.
\end{definition}

\begin{theorem}[\protect{%
\cite[Special case of Corollary~7.5.14]{Cohn2006a}
}]\label{thr:fd.uff}
Let $\alphabet{X}$ be an alphabet and $\field{K}$ a commutative field.
The free associative algebra $R = \freeALG{\field{K}}{\alphabet{X}}$
has a universal field of fractions $\field{F}=\freeFLD{\field{K}}{\alphabet{X}}$
such that every full matrix over $R$ can be inverted over $\field{F}$.
\end{theorem}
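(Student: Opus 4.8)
The plan is to invoke Cohn's general theory of universal fields of fractions and to verify that $R = \freeALG{\field{K}}{\alphabet{X}}$ meets its hypotheses; the bulk of the argument is structural and is exactly what Cohn's Chapter~7 is devoted to, so I would cite it rather than reprove it. The crucial structural input is that the free associative algebra is a \emph{semifir}: every finitely generated left (equivalently right) ideal is free of unique rank. For the free algebra this follows from the weak algorithm with respect to the filtration by word length in $\alphabet{X}$, and I would recall it from Cohn's account of firs.

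From semifir-ness I would pass to the fact that $R$ is a \emph{Sylvester domain}, i.e.\ that the inner rank $\innerrank$ of Definition~\ref{def:fd.full} obeys Sylvester's law of nullity, $\innerrank(AB) \ge \innerrank(A) + \innerrank(B) - n$ for conformable $A$, $B$ with inner dimension $n$. From this one deduces that the set of \emph{non-full} square matrices over $R$ is a prime matrix ideal in Cohn's sense (closed under determinantal sums and diagonal sums, and prime with respect to diagonal sums) and, moreover, the \emph{smallest} such: over any (skew) field the image of a non-full matrix is singular, so every homomorphism from $R$ into a field must annihilate, in the matrix-ideal sense, all non-full matrices.

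Then Cohn's localization theorem applies: the universal localization of $R$ at the complementary set $\Phi$ of all full matrices is an epic $R$-field, call it $\field{F} = \freeFLD{\field{K}}{\alphabet{X}}$, and because $\Phi$ corresponds to the smallest prime matrix ideal, $\field{F}$ is the \emph{universal} $R$-field: every epic $R$-field is a specialization of it. The embedding $R \hookrightarrow \field{F}$ holds because $R$ is a domain and the $1 \times 1$ full matrices are precisely the nonzero elements of $R$, which are therefore inverted faithfully; and by the very construction every full matrix over $R$ becomes invertible over $\field{F}$, whereas no non-full matrix can, since its image remains non-full, hence singular.

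The real obstacle lies in the preparation rather than the assembly: establishing that $\freeALG{\field{K}}{\alphabet{X}}$ is a semifir via the weak algorithm, and developing enough of the theory of inner rank, matrix ideals, and universal matrix localization to identify ``smallest prime matrix ideal'' with ``universal field of fractions''. All of this is precisely Cohn's machinery, so I regard the statement as the announced special case of \cite[Corollary~7.5.14]{Cohn2006a} and take it as the foundation for everything that follows.
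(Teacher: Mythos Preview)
Your proposal is correct and, in fact, more than the paper itself provides: the paper does not give any proof of this theorem but simply records it as a citation from Cohn's monograph (it is labelled as a special case of \cite[Corollary~7.5.14]{Cohn2006a} and treated as a black box underpinning the use of linear representations). Your sketch---weak algorithm $\Rightarrow$ semifir $\Rightarrow$ Sylvester domain $\Rightarrow$ non-full matrices form the least prime matrix ideal $\Rightarrow$ universal localization gives the universal field of fractions inverting exactly the full matrices---is precisely the route Cohn takes, so you have accurately reconstructed the intended background rather than departed from it.
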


\begin{remark}
Non-full matrices become singular under a homomorphism into some
field \cite[Chapter~7]{Cohn2006a}
.
In general (rings), neither do full matrices need to be invertible,
nor do invertible matrices need to be full. An example for the former
is the matrix
\begin{displaymath}
B =
\begin{bmatrix}
. & z & -y \\
-z & . & x \\
y & -x & . 
\end{bmatrix}
\end{displaymath}
over the \emph{commutative} polynomial ring $\field{K}[x,y,z]$
which is \emph{not} a Sylvester domain 
\cite[Section~4]{Cohn1989b}
. An example for the latter are rings
without \emph{unbounded generating number} (UGN)
\cite[Section~7.3]{Cohn2006a}
.
\end{remark}

\begin{definition}[Linear Representations, Dimension, Rank
\cite{Cohn1994a,Cohn1999a}
]\label{def:fd.rep}
Let $f \in \field{F}$.
A \emph{linear representation} of $f$ is a triple $\pi_f = (u,A,v)$ with
$u\trp, v \in \field{K}^{n \times 1}$, full
$A = A_0 \otimes 1 + A_1 \otimes x_1 + \ldots + A_d \otimes x_d$
with $A_\ell \in \field{K}^{n\times n}$ for all $\ell \in \{ 0, 1, \ldots, d \}$
and $f = u A\inv v$.
The \emph{dimension} of $\pi_f$ is $\dim \, (u,A,v) = n$.
It is called \emph{minimal} if $A$ has the smallest possible dimension
among all linear representations of $f$.
The ``empty'' representation $\pi = (,,)$ is
the minimal one of $0 \in \field{F}$ with $\dim \pi = 0$.
Let $f \in \field{F}$ and $\pi$ be a \emph{minimal}
linear representation of $f$.
Then the \emph{rank} of $f$ is
defined as $\rank f = \dim \pi$.
\end{definition}

\begin{definition}[Left and Right Families
\cite{Cohn1994a}
]\label{def:fd.family}
Let $\pi=(u,A,v)$ be a linear representation of $f \in \field{F}$
of dimension $n$.
The families $( s_1, s_2, \ldots, s_n )\subseteq \field{F}$
with $s_i = (A\inv v)_i$
and $( t_1, t_2, \ldots, t_n )\subseteq \field{F}$
with $t_j = (u A\inv)_j$
are called \emph{left family} and \emph{right family} respectively.
$L(\pi) = \linsp \{ s_1, s_2, \ldots, s_n \}$ and
$R(\pi) = \linsp \{ t_1, t_2, \ldots, t_n \}$
denote their linear spans (over $\field{K}$).
\end{definition}

\begin{proposition}[\protect{%
\cite[Proposition~4.7]{Cohn1994a}
}]\label{pro:fd.cohn94.47}
A representation $\pi=(u,A,v)$ of an element $f \in \field{F}$
is minimal if and only if both, the left family
and the right family are $\field{K}$-linearly independent.
In this case, $L(\pi)$ and $R(\pi)$ depend only on $f$.
\end{proposition}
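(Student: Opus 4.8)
\emph{Proof plan.} The plan is to reduce everything to a statement about the left family and then to invoke Cohn's reduction theory for the hard direction. First I would record that, for invertible $P, Q \in \field{K}^{n \times n}$, the triple $\pi^{(P,Q)} := (uP, QAP, Qv)$ is again a linear representation of $f = uA\inv v$, with left family $P\inv(A\inv v)$ and right family $(uA\inv)Q\inv$; consequently $L(\pi)$, $R(\pi)$ and the $\field{K}$-linear (in)dependence of each family are unaffected by such transformations. I would also note that $(v\trp, A\trp, u\trp)$ is a linear representation of the same $f$ (with $A\trp$ again full) whose left and right families are the right and left families of $\pi$. Hence it is enough to prove that $\pi$ is minimal exactly when the left family is $\field{K}$-linearly independent, and that in that case $L(\pi)$ depends only on $f$; the assertions about $R(\pi)$ then follow by transposition.

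For ``minimal $\Rightarrow$ left family independent'' I would argue contrapositively. Assuming $\sum_i \lambda_i s_i = \lambda\trp A\inv v = 0$ for some $0 \neq \lambda \in \field{K}^n$, I would replace $\pi$ by $\pi^{(P,I)}$ with $P$ chosen so that the last row of $P\inv$ is $\lambda\trp$, making $(A\inv v)_n = 0$; then $v = B s'$, where $B \in \freeALG{\field{K}}{\alphabet{X}}^{n \times (n-1)}$ is the matrix of the first $n-1$ columns of $A$ and $s' = (s_1,\dots,s_{n-1})\trp$, and fullness of $A$ forces $B$ to have inner rank $n-1$ (a rank drop of $B$ would, together with the last column of $A$, contradict fullness of $A$; Definition~\ref{def:fd.full}). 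This is the configuration in which a linear representation admits a \emph{contraction}, and the key step is to produce invertible $Q, P$ over $\field{K}$ bringing the representation into the block form
\begin{displaymath}
  QAP = \begin{bmatrix} A' & w \\ . & 1 \end{bmatrix}, \qquad Qv = \begin{bmatrix} v' \\ 0 \end{bmatrix},
\end{displaymath}
so that $uA\inv v = u''A'\inv v'$ for the corresponding $u'' \in \field{K}^{1\times(n-1)}$ and $(u'', A', v')$ is a linear representation of $f$ of dimension $n-1$, showing $\pi$ is not minimal. Carrying out this contraction — turning the single vanishing coordinate of $A\inv v$ into an honest dimension drop — is the step I expect to be the main obstacle; it relies on the behaviour of non-full linear matrices over $\freeALG{\field{K}}{\alphabet{X}}$ and is the substance of Cohn's reduction of admissible systems and linear representations \cite[Chapter~7]{Cohn2006a}, \cite{Cohn1994a} (and of the minimisation algorithm of \cite{Schrempf2018a9}).

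For the converse and for the intrinsic nature of $L(\pi), R(\pi)$, I would assume both families of $\pi$ independent and, supposing $\pi$ not minimal, take a representation $\pi'$ of $f$ of smaller dimension; then I would form the representation $(\,(u,-u'),\ \mathrm{diag}(A,A'),\ (v;v')\,)$ of $f-f=0$, whose left and right families concatenate those of $\pi$ and $\pi'$ and which, being a representation of $0$ by a nonzero covector of positive dimension, must have a dependent family, and contract it step by step back to the empty representation of $0$. Tracking which contractions act on the $\pi$-block yields a dependence in a family of $\pi$, contradicting the hypothesis; and the same comparison of $\pi$ with a minimal representation shows that $L(\pi), R(\pi)$ for minimal $\pi$ do not depend on the choice of representation. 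For the precise bookkeeping here I would simply follow \cite[Proposition~4.7]{Cohn1994a}, the only genuinely hard ingredient remaining the contraction step above.
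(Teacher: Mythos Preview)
The paper does not prove this proposition at all: it is quoted verbatim as background from \cite[Proposition~4.7]{Cohn1994a} and is immediately followed by a remark, with no proof environment. There is therefore no ``paper's own proof'' to compare your proposal against; your sketch is essentially an outline of Cohn's original argument, to which the paper simply defers.

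One genuine technical slip in your outline: the transposition trick does not work as stated. Over a noncommutative ring the entrywise transpose does \emph{not} satisfy $(AB)\trp = B\trp A\trp$, so $(A\trp)\inv \neq (A\inv)\trp$ in $\field{F}$ and $(v\trp,A\trp,u\trp)$ is not in general a representation of the same element $f$. The symmetry between left and right families in Cohn's argument comes instead from running the reduction on the dual system $u = tA$ (or, if one insists on an involution, from the word-reversal anti-automorphism of $\freeALG{\field{K}}{\alphabet{X}}$, which however sends $f$ to its reversal, not to itself). Apart from this, your plan --- detect a linear dependence, normalise it into the last coordinate, and invoke Cohn's contraction/reduction of admissible systems to drop the dimension --- is the standard route, and you correctly identify the contraction step (producing the block-triangular form over $\field{K}$) as the only substantive ingredient.
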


\begin{remark}
The left family $(A\inv v)_i$ (respectively the right family $(u A\inv)_j$)
and the solution vector $s$ of $As = v$ (respectively $t$ of $u = tA$)
are used synonymously.
\end{remark}

\begin{definition}[Admissible Linear Systems, Admissible Transformations
\cite{Schrempf2017a9}
]\label{def:fd.als}
A linear representation $\als{A} = (u,A,v)$ of $f \in \field{F}$
is called \emph{admissible linear system} (ALS) for $f$,
written also as $A s = v$,
if $u=e_1=[1,0,\ldots,0]$.
The element $f$ is then the first component
of the (unique) solution vector $s$.
Given a linear representation $\als{A} = (u,A,v)$
of dimension $n$ of $f \in \field{F}$
and invertible matrices $P,Q \in \field{K}^{n\times n}$,
the transformed $P\als{A}Q = (uQ, PAQ, Pv)$ is
again a linear representation (of $f$).
If $\als{A}$ is an ALS,
the transformation $(P,Q)$ is called
\emph{admissible} if the first row of $Q$ is $e_1 = [1,0,\ldots,0]$.
\end{definition}

\section{Free Derivation}\label{sec:fd.der}

Before we define the concrete (partial) \emph{derivation}
and (partial) \emph{directional derivation}, we start with
a (partial) \emph{formal derivation}
on the level of admissible linear systems
and show the basic properties with respect to the
represented elements, in particular that
the (formal) derivation does \emph{not} depend on the ALS
in Corollary~\ref{cor:fd.forder}.

In other words: Given some letter $x \in \alphabet{X}$
and an \emph{admissible linear system} $\als{A}=(u,A,v)$,
there is an \emph{algorithmic} point of view
in which the (free) derivation $\ncpd_x$ defines the
ALS $\als{A}' = \ncpd_x \als{A}$. (Alternatively one
can identify $x$ by its index $\ell \in \{ 1, 2, \ldots, d \}$
and write $\als{A}' = \ncpd_{\ell} \als{A}$.)
Written in a sloppy way, we show that
$\ncpd_x(\als{A} + \als{B}) = \ncpd_x \als{A} + \ncpd_x \als{B}$ and
$\ncpd_x(\als{A} \cdot \als{B})
  = \ncpd_x \als{A} \cdot \als{B} + \als{A} \cdot \ncpd_x \als{B}$,
yielding immediately the \emph{algebraic} point of view
(summarized in Theorem~\ref{thr:fd.ncpd})
by taking the respective first component of the
\emph{unique} solution vectors $f = (s_{\als{A}})_1$
and $g = (s_{\als{B}})_1$.

\begin{remark}
The following definition is much more general then usually needed.
One gets the ``classical'' (partial) derivation with respect to
some letter $x = x_{\ell} \in \alphabet{X}$
(with $\ell \in \{ 1, 2, \ldots, d \}$) for $k=0$
resp.~(the empty word) $a = 1 \in \alphabet{X}^*$.
\end{remark}

\begin{definition}[Formal Derivative]
\label{def:fd.alsfor}
Let $\als{A}=(u,A,v)$ be an \emph{admissible linear system}
of dimension $n \ge 1$ for some element in the free field
$\field{F} = \freeFLD{\field{K}}{\alphabet{X}}$
and $\ell \neq k \in \{ 0, 1, \ldots, d \}$.
The ALS 
\begin{displaymath}
\ncpd_{\ell|k} \als{A} = \ncpd_{\ell|k} (u,A,v) =
\left(
\begin{bmatrix}
u & . 
\end{bmatrix},
\begin{bmatrix}
A & A_{\ell}\otimes x_k \\
. & A
\end{bmatrix},
\begin{bmatrix}
. \\ v
\end{bmatrix}
\right)
\end{displaymath}
(of dimension $2n$)
is called (partial) \emph{formal derivative} of $\als{A}$,
(with respect to $x_{\ell},x_k \in \{ 1 \} \cup \alphabet{X}$).
For $x, a \in \{ 1 \} \cup \alphabet{X}= \{ 1, x_1, x_2, \ldots, x_d \}$
with $x \neq a$ we write also
$\ncpd_{x|a} \als{A}$, having the indices $\ell \neq k \in \{ 0, 1, \ldots, d \}$
of $x$ resp.~$a$ in mind.
\end{definition}

\begin{lemma}\label{lem:fd.alstoele}
Let $\als{A}_f = (u_f,A_f,v_f)$ and $\als{A}_g = (u_g,A_g,v_g)$
be admissible linear systems of dimension $\dim \als{A}_f \ge 1$
resp.~$\dim \als{A}_g \ge 1$.
Fix $x, a \in \{ 1 \} \cup \alphabet{X}$
such that $x \neq a$. Then
$\ncpd_{x|a}(\als{A}_f + \als{A}_g) = \ncpd_{x|a}\als{A}_f + \ncpd_{x|a}\als{A}_g$.
\end{lemma}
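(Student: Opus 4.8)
The statement asserts that the formal derivative construction is additive with respect to the standard sum of admissible linear systems. The natural strategy is purely combinatorial: write down explicitly the ALS for $\als{A}_f + \als{A}_g$ using the known addition formula on linear representations, apply $\ncpd_{x|a}$ to it by Definition~\ref{def:fd.alsfor}, and then separately form $\ncpd_{x|a}\als{A}_f + \ncpd_{x|a}\als{A}_g$ and compare the two block matrices. Both sides should turn out to be equal, or at worst equal up to a permutation of the basis that is an admissible transformation (which is enough, since both represent the same element and admissibly equivalent systems are interchangeable). So first I would fix notation: say $\dim \als{A}_f = m$, $\dim \als{A}_g = n$, with $\als{A}_f=(e_1, A_f, v_f)$, $\als{A}_g=(e_1, A_g, v_g)$, and recall that the sum ALS has dimension $m+n$ with system matrix of the block form $\left[\begin{smallmatrix} A_f & -A_f e_1 e_1\trp\,?\\ . & A_g\end{smallmatrix}\right]$ — I would pull the precise coupling block from the cited addition construction (Cohn--Reutenauer / \cite[Prop.~1.13]{Schrempf2017a9}) rather than guess it, and pin down exactly which entry carries the "glue" that encodes $f+g$ in the first component.

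**Main computation.** With the sum ALS in hand, applying $\ncpd_{x|a}$ doubles its dimension to $2(m+n)$, producing a $2\times 2$ block matrix whose diagonal blocks are two copies of the sum matrix and whose upper-right block is $(\text{sum of }A_\ell\text{-parts})\otimes x_k$. The key observation is that the $A_\ell$-coefficient of the summed system is itself block-diagonal, $\mathrm{diag}((A_f)_\ell, (A_g)_\ell)$, because addition of linear representations just juxtaposes the coefficient matrices on the diagonal and only the constant part $A_0$ acquires off-diagonal glue (the glue is a scalar coupling, involving no letter). Therefore the upper-right coupling block of $\ncpd_{x|a}(\als{A}_f+\als{A}_g)$ is also block-diagonal in the $f/g$ split. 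Now I would reorder the $2(m+n)$ coordinates from the grouping $(f\text{-top}, g\text{-top}, f\text{-bot}, g\text{-bot})$ to the grouping $(f\text{-top}, f\text{-bot}, g\text{-top}, g\text{-bot})$: this is a permutation similarity $P(\cdot)P\trp$, it fixes the first coordinate so it is an admissible transformation, and under it the reordered matrix is visibly the sum-ALS block-diagonal combination of $\ncpd_{x|a}\als{A}_f$ and $\ncpd_{x|a}\als{A}_g$ — i.e.\ exactly the system matrix of $\ncpd_{x|a}\als{A}_f + \ncpd_{x|a}\als{A}_g$, with the $u$- and $v$-vectors transforming correctly as well. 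That establishes the equality (as ALS, up to admissible equivalence, which is the sense in which these identities are stated in this section).

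**Expected obstacle.** The only real subtlety is bookkeeping: getting the coupling/glue block of the sum ALS exactly right (sign and location), and then checking that the permutation that re-sorts top/bottom versus $f$/$g$ genuinely (i) has $e_1$ as its first row when restricted to the $Q$-side, so that it is admissible, and (ii) carries the padded $u=[e_1\ .\ ]$ and $v=[.\ ;\ v]$ of the derived sum to the corresponding padded vectors of the sum of the derived systems. Neither is deep, but it is the step where an off-by-one block or a transposed permutation would break the argument, so I would write the two $2\times 2$ (hence effectively $4\times 4$ in block form) matrices out in full and exhibit $P$ explicitly. Everything else — that all operations stay over $\field{K}$, that fullness is preserved, that dimensions add up — is immediate from the definitions already in place, and I would not belabor it.
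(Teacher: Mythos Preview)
Your overall strategy---write out $\ncpd_{x|a}(\als{A}_f+\als{A}_g)$, permute blocks, compare with $\ncpd_{x|a}\als{A}_f + \ncpd_{x|a}\als{A}_g$---is the same as the paper's. But your ``key observation'' is false, and that is where the argument breaks.

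The coupling block in the sum ALS (from \cite[Proposition~1.13]{Schrempf2017a9}) is $-A_f\,u_f\trp u_g$, not a scalar constant: it carries the full letter content of $A_f$. Consequently the $\ell$-th coefficient of the summed system matrix is
\[
\begin{bmatrix}
A_f^{\ell} & -A_f^{\ell}\, u_f\trp u_g \\
. & A_g^{\ell}
\end{bmatrix},
\]
which is \emph{not} block-diagonal. After you apply $\ncpd_{x|a}$ and perform the $2\leftrightarrow 3$ block permutation you propose, the system matrix still has two unwanted entries in the last block column, namely $-A_f^{\ell} u_f\trp u_g \otimes a$ in block $(1,4)$ and $-A_f u_f\trp u_g$ in block $(2,4)$, whereas the target $\ncpd_{x|a}\als{A}_f + \ncpd_{x|a}\als{A}_g$ has zeros there. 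A permutation alone cannot remove them.

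The paper closes this gap with one more admissible column operation: since $u_f\trp u_g$ has only its first column nonzero, the offending entries live in a single column, and that column can be cancelled using the first column of block column~2 (which contains $A_f^{\ell}\otimes a$ on top of $A_f$). Once you add that elimination step, your argument goes through and coincides with the paper's.
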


\begin{proof}
Let $\ell \neq k \in \{ 0, 1, \ldots, d \}$ be the indices of
$x$ resp.~$a$. We write $A_f^{\ell}$ for the coefficient matrix
$A_{\ell}$ of $A_f$ resp.~$A_g^{\ell}$ for $A_{\ell}$ of $A_g$.
Taking the sum from 
\cite[Proposition~1.13]{Schrempf2017a9}
\ we have
\begin{align*}
\ncpd_{x|a} & (\als{A}_f + \als{A}_g) = \left(
  \begin{bmatrix}
  u_f & . 
  \end{bmatrix},
  \begin{bmatrix}
  A_f & -A_f u_f\trp u_g \\
  . & A_g
  \end{bmatrix}, 
  \begin{bmatrix}
  v_f \\ v_g
  \end{bmatrix}
  \right) \\
&= \left(
  \begin{bmatrix}
  u_f & . & . & .
  \end{bmatrix},
  \begin{bmatrix}
  A_f & -A_f u_f\trp u_g & A_f^{\ell}\otimes a & -A_f^{\ell} u_f\trp u_g \otimes a \\
  . & A_g & . & A_g^{\ell}\otimes a \\
  . & . & A_f & -A_f u_f\trp u_g \\
  . & . & . & A_g
  \end{bmatrix}, 
  \begin{bmatrix}
  . \\ . \\ v_f \\ v_g
  \end{bmatrix}
  \right) \\
&= \left(
  \begin{bmatrix}
  u_f & . & . & .
  \end{bmatrix},
  \begin{bmatrix}
  A_f & A_f^{\ell}\otimes a & -A_f u_f\trp u_g & -A_f^{\ell} u_f\trp u_g \otimes a \\
  . & A_f & .  & -A_f u_f\trp u_g \\
  . & . & A_g & A_g^{\ell}\otimes a \\
  . & . & . & A_g
  \end{bmatrix}, 
  \begin{bmatrix}
  . \\ v_f \\ . \\ v_g
  \end{bmatrix}
  \right) \\
&= \left(
  \begin{bmatrix}
  u_f & . & . & .
  \end{bmatrix},
  \begin{bmatrix}
  A_f & A_f^{\ell}\otimes a & -A_f u_f\trp u_g & 0\\
  . & A_f & .  & 0  \\
  . & . & A_g & A_g^{\ell}\otimes a \\
  . & . & . & A_g
  \end{bmatrix}, 
  \begin{bmatrix}
  . \\ v_f \\ . \\ v_g
  \end{bmatrix}
  \right) \\
&= \left(
  \begin{bmatrix}
  u_f & . 
  \end{bmatrix},
  \begin{bmatrix}
  A_f & A_f^{\ell}\otimes a \\
  . & A_f
  \end{bmatrix},
  \begin{bmatrix}
  . \\ v_f
  \end{bmatrix}
  \right)
  + \left(
  \begin{bmatrix}
  u_g & . 
  \end{bmatrix},
  \begin{bmatrix}
  A_g & A_g^{\ell}\otimes a \\
  . & A_g
  \end{bmatrix},
  \begin{bmatrix}
  . \\ v_g
  \end{bmatrix}
  \right) \\
&= \ncpd_{x|a}\als{A}_f + \ncpd_{x|a}\als{A}_g.
\end{align*}
The two main steps are swapping block rows~2 and~3 and
block columns~2 and~3,
and eliminating the \emph{single} non-zero (first) column in
$-A_f^{\ell} u_f\trp u_g \otimes a$ and $-A_f u_f\trp u_g$
(in block column~4)
using the first column in block column~2.
\end{proof}

\begin{corollary}\label{cor:fd.forder}
Let $f,g \in \field{F}$ be given by the admissible linear systems
$\als{A}_f = (u_f,A_f,v_f)$ and $\als{A}_g = (u_g,A_g,v_g)$ of
dimensions $n_f,n_g \ge 1$ respectively.
Fix $x, a \in \{ 1 \} \cup \alphabet{X}$ such that $x\neq a$.
Then $f=g$ implies that $\ncpd_{x|a}\als{A}_f - \ncpd_{x|a}\als{A}_g$
is an ALS for $0 \in \field{F}$.
\end{corollary}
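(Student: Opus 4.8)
The plan is to reduce the statement to two facts already in hand: the additive behaviour of the formal derivative (Lemma~\ref{lem:fd.alstoele}) and the additive behaviour of linear representations at the level of represented elements. First I would observe that $\ncpd_{x|a}\als{A}_f - \ncpd_{x|a}\als{A}_g$ is, by definition of the difference of admissible linear systems, the same as $\ncpd_{x|a}\als{A}_f + \ncpd_{x|a}(-\als{A}_g)$, where $-\als{A}_g$ denotes the ALS obtained from $\als{A}_g$ by negating $v_g$ (or $u_g$); this represents $-g \in \field{F}$. Since $\ncpd_{x|a}$ acts only on $A_f, A_g$ and their coefficient matrices $A_f^\ell, A_g^\ell$, and not on $v$, one checks directly that $\ncpd_{x|a}(-\als{A}_g) = -\ncpd_{x|a}\als{A}_g$ (negation of the $v$-part commutes with the block construction of Definition~\ref{def:fd.alsfor}). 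Hence $\ncpd_{x|a}\als{A}_f - \ncpd_{x|a}\als{A}_g = \ncpd_{x|a}(\als{A}_f + (-\als{A}_g)) = \ncpd_{x|a}(\als{A}_f - \als{A}_g)$, using Lemma~\ref{lem:fd.alstoele} for the middle equality.

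Next I would use the hypothesis $f = g$, equivalently $f - g = 0 \in \field{F}$. The ALS $\als{A}_f - \als{A}_g$ (the sum of $\als{A}_f$ and $-\als{A}_g$, via \cite[Proposition~1.13]{Schrempf2017a9}) is a linear representation of $f - g = 0$. I would then invoke the structure of the formal derivative on an ALS representing zero: writing $\als{B} = (u_B, A_B, v_B)$ for this ALS of $0$, its formal derivative is
\begin{displaymath}
\ncpd_{x|a}\als{B} = \left(\begin{bmatrix} u_B & . \end{bmatrix}, \begin{bmatrix} A_B & A_B^{\ell}\otimes x_k \\ . & A_B \end{bmatrix}, \begin{bmatrix} . \\ v_B \end{bmatrix}\right),
\end{displaymath}
and the represented element is the first component of the solution $s$ of the block system. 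Solving bottom-to-top: the lower block gives $A_B s_2 = v_B$, so $(s_2)_1 = 0$ because $\als{B}$ represents $0$; the upper block gives $A_B s_1 = -A_B^\ell \otimes x_k \cdot s_2$, i.e.\ $s_1 = -(A_B^\ell \otimes x_k) A_B\inv v_B$. Its first component is $u_B s_1 = -u_B (A_B^\ell \otimes x_k) A_B\inv v_B$, which I want to be $0$.

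The main obstacle — and the only genuine content — is showing that $u_B (A_B^\ell \otimes x_k) A_B\inv v_B = 0$ whenever $u_B A_B\inv v_B = 0$. Here I would argue that the right family of any linear representation of $0$ is ``degenerate'' in the appropriate sense: either appeal directly to minimality (a minimal representation of $0$ is the empty one, so the nonzero-dimensional $\als{B}$ has $\field{K}$-linearly dependent left and right families by Proposition~\ref{pro:fd.cohn94.47}), or, more robustly, use that $t_B := u_B A_B\inv$ and $s_B := A_B\inv v_B$ satisfy $t_B v_B = 0$, and that the quantity $u_B (A_B^\ell \otimes x_k) A_B\inv v_B = t_B (A_B^\ell \otimes x_k) s_B$ can be recognized as the coefficient extraction giving (a piece of) $\ncpd_x$ applied to the represented element $0$ — which must vanish. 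Concretely I expect the cleanest route is: the paper will shortly establish (in Theorem~\ref{thr:fd.ncpd}) that the first component of the solution of $\ncpd_{x|a}\als{A}$ equals an explicit derivation-type expression in $f$; applying that identity to an ALS of $0$ forces the output to be $0$. If that forward reference is to be avoided, one instead notes that $t_B (A_B^\ell \otimes x_k) s_B$ is $\field{K}$-bilinear in the entries of $t_B$ and $s_B$, and that the relations among these entries forced by $u_B A_B\inv v_B = 0$ (equivalently, $v_B \in$ the span of the columns of $A_B$ restricted off the first coordinate of $t_B$, etc.) are preserved under multiplication by the constant matrix $A_B^\ell$ and the letter $x_k$; tracking this carefully yields the vanishing. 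Either way, once that identity is secured, $\ncpd_{x|a}(\als{A}_f - \als{A}_g)$ has first solution component $0$, so it is an ALS for $0 \in \field{F}$, completing the proof.
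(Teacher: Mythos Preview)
Your reduction via Lemma~\ref{lem:fd.alstoele} to the statement ``$\ncpd_{x|a}$ of any ALS for $0$ is again an ALS for $0$'' is exactly what the paper intends (the corollary is stated without proof, as immediate from the lemma), and you are right that this residual step carries the actual content. However, neither of your two proposed resolutions closes it. The forward reference to Theorem~\ref{thr:fd.ncpd} is circular: Definition~\ref{def:fd.forder}, and hence the entire subsequent development including Theorem~\ref{thr:fd.ncpd}, presupposes the well-definedness that this corollary is supposed to establish. Your alternative ``bilinearity'' sketch is not an argument either: from $u_B A_B^{-1} v_B = 0$ one cannot conclude $u_B A_B^{-1} M A_B^{-1} v_B = 0$ for a generic matrix $M$, and the phrase ``tracking this carefully yields the vanishing'' names no mechanism by which the relations among the $t_i$ and $s_j$ would propagate through $A_B^\ell \otimes a$.

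Two honest ways to finish. First, check directly that the element $-u A^{-1}(A_\ell\otimes a)A^{-1}v$ represented by $\ncpd_{x|a}(u,A,v)$ is invariant under admissible $(P,Q)$-transformations (immediate, since $(PAQ)_\ell = P A_\ell Q$) and under adjoining or removing a ``trivial'' block of either standard type; since any ALS for $0$ is connected to, say, $([1],[1],[0])$ by a chain of such moves (this is the minimization theory cited in the paper), its formal derivative agrees with that of $([1],[1],[0])$, which is visibly~$0$. Second, and perhaps closer to what the paper tacitly assumes, invoke the existence of a derivation $D$ on $\field{F}$ extending $x_\ell\mapsto a$, $x_j\mapsto 0$ (the paper later cites \cite[Theorem~7.5.17]{Cohn2006a}); differentiating $f=uA^{-1}v$ yields $D(f)=-uA^{-1}(A_\ell\otimes a)A^{-1}v$, so the formal derivative computes $D(f)$ and therefore depends only on $f$.
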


\begin{definition}[Formal Derivative]
\label{def:fd.forder}
Let $f \in \field{F}$ be given by the ALS $\als{A}=(u,A,v)$
and fix $x,a \in \{ 1 \} \cup \alphabet{X} = \{ 1, x_1, x_2, \ldots, x_d \}$
such that $x \neq a$.
Denote by $\ncpd_{x|a} f$ the element defined by the ALS $\ncpd_{x|a} \als{A}$.
The map $\ncpd_{x|a} : \field{F} \to \field{F}$,
$f \mapsto \ncpd_{x|a} f$ is called
(partial) \emph{formal derivation},
the element $\ncpd_{x|a} f$ (partial) \emph{formal derivative} of $f$.
\end{definition}

\begin{corollary}\label{cor:fd.linear}
For each $x,a \in \{ 1 \} \cup \alphabet{X}$
with $x \neq a$,
the formal derivation $\ncpd_{x|a} : \field{F} \to \field{F}$
is a \emph{linear} map.
\end{corollary}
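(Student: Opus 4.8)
The plan is to deduce linearity of $\ncpd_{x|a}$ directly from the additivity already established for the underlying systems, together with compatibility with scalar multiplication, both checked at the level of admissible linear systems and then transferred to the represented elements via Corollary~\ref{cor:fd.forder}.

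First I would handle additivity. Given $f,g \in \field{F}$ with ALS $\als{A}_f$ and $\als{A}_g$, Lemma~\ref{lem:fd.alstoele} shows that $\ncpd_{x|a}(\als{A}_f + \als{A}_g)$ and $\ncpd_{x|a}\als{A}_f + \ncpd_{x|a}\als{A}_g$ are literally the \emph{same} ALS. Since $\als{A}_f + \als{A}_g$ is an ALS for $f+g$, its formal derivative represents $\ncpd_{x|a}(f+g)$ by Definition~\ref{def:fd.forder}, while the right-hand side represents $\ncpd_{x|a} f + \ncpd_{x|a} g$. Hence $\ncpd_{x|a}(f+g) = \ncpd_{x|a} f + \ncpd_{x|a} g$. (Strictly, one should note that Definition~\ref{def:fd.forder} is well posed in the first place, i.e.\ independent of the chosen ALS --- but that is exactly the content of Corollary~\ref{cor:fd.forder}, which I may assume.)

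Next I would treat scalar multiplication. For $\alpha \in \field{K}$, a standard ALS for $\alpha f$ is obtained from $\als{A}_f = (u_f,A_f,v_f)$ by scaling, e.g.\ $(\alpha u_f, A_f, v_f)$ (or $(u_f, A_f, \alpha v_f)$). Inspecting Definition~\ref{def:fd.alsfor}, the formal derivative $\ncpd_{\ell|k}$ acts blockwise and the letter-$\ell$ coefficient matrix of this scaled system is $\alpha A_f^{\ell}$ (resp.\ the coefficient is untouched if one scales $v_f$ instead), so $\ncpd_{x|a}(\alpha u_f, A_f, v_f)$ differs from $\alpha \cdot \ncpd_{x|a}\als{A}_f$ only by the same scalar on the $u$-vector. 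Either way the two systems represent the same element, so $\ncpd_{x|a}(\alpha f) = \alpha\, \ncpd_{x|a} f$; the case $\alpha = 0$ is consistent with the empty representation. Combining the two displays gives $\field{K}$-linearity, which is the claim.

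The only genuine obstacle is bookkeeping rather than mathematics: making sure the ALS one picks for $\alpha f$ and for $f+g$ are exactly the ones used in Lemma~\ref{lem:fd.alstoele} and in \cite[Proposition~1.13]{Schrempf2017a9}, so that the block structures line up verbatim and no hidden admissible transformation is needed. Once that alignment is fixed, everything is immediate from Lemma~\ref{lem:fd.alstoele} and the well-definedness furnished by Corollary~\ref{cor:fd.forder}.
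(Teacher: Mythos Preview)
Your argument is essentially the intended one: the paper states this corollary without proof, as an immediate consequence of Lemma~\ref{lem:fd.alstoele} (additivity on the ALS level) together with the well-definedness in Corollary~\ref{cor:fd.forder}, and that is exactly what you do for the additive part.

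There is, however, a small technical slip in your treatment of scalars. The triple $(\alpha u_f, A_f, v_f)$ is \emph{not} an admissible linear system in the sense of Definition~\ref{def:fd.als}, since an ALS requires $u = e_1$; hence Definition~\ref{def:fd.alsfor} does not apply to it, and in any case the system matrix $A_f$ is unchanged by scaling $u$, so its $\ell$-th coefficient matrix is still $A_f^{\ell}$, not $\alpha A_f^{\ell}$. Your parenthetical alternative $(u_f, A_f, \alpha v_f)$ \emph{is} a genuine ALS for $\alpha f$, and applying Definition~\ref{def:fd.alsfor} to it yields
\[
\left(
\begin{bmatrix} u_f & . \end{bmatrix},
\begin{bmatrix} A_f & A_f^{\ell}\otimes a \\ . & A_f \end{bmatrix},
\begin{bmatrix} . \\ \alpha v_f \end{bmatrix}
\right),
\]
which visibly represents $\alpha\,\ncpd_{x|a} f$. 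So simply drop the first option and run the argument with the $v$-scaling; then everything goes through cleanly, including $\alpha = 0$ via the zero right-hand side.
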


Now we are almost done.
Before we show the \emph{product rule}
in the following Lemma~\ref{lem:fd.product},
we have a look into the left family of the ALS of the
(formal) derivative of a polynomial. Let $p = x^3 \in \field{F}$
(and $a=1$).
A (minimal) ALS for $\ncpd_{x|1} p$ is given by
\begin{displaymath}
\begin{bmatrix}
1 & -x & . & . & 0 & -1 & . & . \\
. & 1 & -x & . & . & 0 & -1 & . \\
. & . & 1 & -x & . & . & 0 & -1 \\
. & . & . & 1 & . & . & . & 0 \\
. & . & . & . & 1 & -x & . & . \\
. & . & . & . & . & 1 & -x & . \\
. & . & . & . & . & . & 1 & -x \\
. & . & . & . & . & . & . & 1
\end{bmatrix}
s = 
\begin{bmatrix}
0 \\ 0 \\ 0 \\ 0 \\ . \\ . \\ . \\ 1
\end{bmatrix},
\quad
s =
\begin{bmatrix}
3 x^2 \\ 2x \\ 1 \\ 0 \\ x^3 \\ x^2 \\ x \\ 1
\end{bmatrix}.
\end{displaymath}
Notice that the first four entries in the left family of $\ncpd_{x|1} \als{A}$
are $s_i = \ncpd_{x|1} s_{i+4}$.

\begin{lemma}[Product Rule]\label{lem:fd.product}
Let $f,g \in \field{F}$ be given by the
admissible linear systems $\als{A}_f = (u_f, A_f, v_f)$
and $\als{A}_g = (u_g, A_g, v_g)$ of dimension $n_f,n_g \ge 1$
respectively.
Fix $x \in \alphabet{X}$ and $a \in \{ 1 \} \cup \alphabet{X} \setminus \{ x \}$.
Then $\ncpd_{x|a} (fg) = \ncpd_{x|a} f \,g + f \, \ncpd_{x|a} g$.
\end{lemma}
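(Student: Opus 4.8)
The plan is to prove the product rule by an explicit matrix computation, mirroring the proof of Lemma~\ref{lem:fd.alstoele}, but now starting from the product of admissible linear systems rather than the sum. First I would recall from \cite[Proposition~1.13]{Schrempf2017a9} the ALS for $fg$: it has block-triangular coefficient matrix $\left[\begin{smallmatrix} A_f & -v_f u_g \\ . & A_g \end{smallmatrix}\right]$ (the precise off-diagonal coupling term being the rank-one block linking the bottom of the first system to the top of the second), with $u = \begin{bmatrix} u_f & . \end{bmatrix}$ and right-hand side $\begin{bmatrix} . \\ v_g \end{bmatrix}$. Applying the formal derivative operator $\ncpd_{x|a}$ from Definition~\ref{def:fd.alsfor} to this $2n_f+2n_g$-reduced... rather $(n_f+n_g)$-dimensional ALS doubles its dimension and produces the coupling block $(A_{fg})_\ell \otimes a$, where $(A_{fg})_\ell$ is built from $A_f^\ell$, $A_g^\ell$ and the derivative of the rank-one coupling term with respect to $x_\ell$ (which vanishes, since that term is a constant matrix tensored with $1$, not with $x_\ell$).

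Next I would take the resulting $2(n_f+n_g)$-dimensional ALS and put it through a sequence of admissible transformations: permute the four block rows and columns so that the two ``$f$-blocks'' $A_f, A_f$ come first and the two ``$g$-blocks'' $A_g, A_g$ come last, exactly as in Lemma~\ref{lem:fd.alstoele}. After this reordering the coefficient matrix should become block upper triangular with diagonal blocks (reading down) $A_f$, $A_f$, $A_g$, $A_g$, and the off-diagonal blocks will contain: the internal derivative couplings $A_f^\ell \otimes a$ (linking block~1 to block~2) and $A_g^\ell \otimes a$ (linking block~3 to block~4); the product coupling $-v_f u_g$ linking block~1 to block~3; and mixed terms like $-v_f u_g \otimes a$-type blocks (coming from differentiating $fg$) linking block~1 to block~4 and block~2 to block~3. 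The target is the ALS for $\ncpd_{x|a}f \cdot g + f \cdot \ncpd_{x|a}g$, which by Lemma~\ref{lem:fd.alstoele} (linearity, already proved) is the sum of the ALS for $(\ncpd_{x|a}f)\,g$ and the ALS for $f\,(\ncpd_{x|a}g)$; writing those two products out via \cite[Proposition~1.13]{Schrempf2017a9} and then forming their direct sum gives a $2(n_f+n_g)$-dimensional ALS whose block structure I would match term-by-term against the transformed derivative ALS. The spurious mixed block (say the one linking block~2 to block~3, or block~1 to block~4) should have only its first column non-zero and can be cleared by a column operation using the first column of the $A_f^\ell \otimes a$ block, just as in the last step of Lemma~\ref{lem:fd.alstoele}; by Corollary~\ref{cor:fd.forder} and Proposition~\ref{pro:fd.cohn94.47} it is enough that the two ALS become identical after admissible transformations, since both represent the element in question.

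I expect the main obstacle to be bookkeeping: correctly tracking which tensor factor ($1$ versus $a$ versus $x_\ell$) appears in each of the roughly half-dozen off-diagonal blocks after differentiation, and verifying that after the row/column permutation the cross terms between the $f$-part and the $g$-part assemble into exactly the coupling blocks $-v_{\ncpd f} u_g$ and $-v_f u_{\ncpd g}$ that \cite[Proposition~1.13]{Schrempf2017a9} prescribes for the two summand products (here $v_{\ncpd f} = \begin{bmatrix} . \\ v_f \end{bmatrix}$ and $u_{\ncpd g} = \begin{bmatrix} u_g & . \end{bmatrix}$ are the right-hand-side and $u$-vectors of the derivative ALS). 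The key algebraic fact making everything work is that $\ncpd_{x|a}$ applied to the rank-one coupling block of a product ALS produces no new $x_\ell$-dependence, so the only genuinely new blocks are the internal $A^\ell \otimes a$ couplings on the $f$-side and the $g$-side, which is precisely the shape of a sum of two derivative-of-product ALS. Once the two matrices, $u$-vectors and $v$-vectors agree under an admissible $(P,Q)$, Proposition~\ref{pro:fd.cohn94.47} (or simply the uniqueness of the first solution component for an ALS) finishes the proof.
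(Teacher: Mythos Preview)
Your plan runs in the opposite direction from the paper's proof (you start from $\ncpd_{x|a}(\als{A}_f\als{A}_g)$ and try to reach the right-hand side; the paper starts from the right-hand side and reduces to the left), but there is a concrete dimension error that blocks the matching step as you have described it. The ALS for $(\ncpd_{x|a} f)\,g$ has dimension $2n_f+n_g$ and the ALS for $f\,(\ncpd_{x|a} g)$ has dimension $n_f+2n_g$; their sum via \cite[Proposition~1.13]{Schrempf2017a9} therefore has dimension $3(n_f+n_g)$, a \emph{six}-block system with diagonal blocks $A_f,A_f,A_g,A_f,A_g,A_g$, not the $2(n_f+n_g)$-dimensional four-block system you describe. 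Since admissible transformations $(P,Q)$ preserve dimension, no amount of block permutation and column clearing will turn the $2(n_f+n_g)$-dimensional ALS $\ncpd_{x|a}(\als{A}_f\als{A}_g)$ into that sum ALS.

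This mismatch is exactly what the paper's proof spends its effort on: it writes out the six-block right-hand side, then performs two successive block-row/column eliminations (each removing one redundant diagonal block, first an $A_g$, then an $A_f$) to collapse it to a four-block system, which after a final swap of block rows/columns~2 and~3 coincides with $\ncpd_{x|a}(\als{A}_f\als{A}_g)$. Your observation that the coupling block $-v_f u_g$ is constant, so its $x_\ell$-component vanishes, is correct and is precisely why the $(1,4)$ block of the target is zero (the paper notes this as ``the upper right zero \ldots\ because of $x\neq 1$''); but there is no ``spurious mixed block'' left to clear on the $\ncpd_{x|a}(fg)$ side after the swap---the difficulty lies entirely in bringing the \emph{other} side down to four blocks. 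To repair your argument you would either have to reverse those two block eliminations (enlarging the left-hand side to six blocks), or abandon ALS-matching and instead read off the first component of the solution vector of the four-block system directly, which gives $(\ncpd_{x|a} f)\,g + f\,(\ncpd_{x|a} g)$ in one line.
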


\begin{proof}
Let $\ell \neq k \in \{ 0, 1, \ldots, d \}$ be the indices of
$x$ resp.~$a$. We write $A_f^{\ell}$ for the coefficient matrix
$A_{\ell}$ of $A_f$ resp.~\raisebox{0pt}[0pt][0pt]{$A_g^{\ell}$}
for $A_{\ell}$ of $A_g$.
We take the sum and the product from
\cite[Proposition~1.13]{Schrempf2017a9}
\ and start with the ALS from the right hand side,
\begin{displaymath}
\begin{bmatrix}
A_f & A_f^{\ell}\otimes a & 0 & -A_f u_f \trp u_g  & . & . \\
. & A_f & -v_f u_g & . & . & . \\
. & . & A_g & . & . & . \\
. & . & . & A_f & -v_f u_g & . \\
. & . & . & . & A_g & A_g^{\ell}\otimes a \\
. & . & . & . & . & A_g 
\end{bmatrix}
s =
\begin{bmatrix}
. \\ . \\ v_g \\ . \\ . \\ v_g
\end{bmatrix},
\end{displaymath}
subtract block row~6 from block row~3, add block column~3 to block
column~6 and remove block row/column~3 to get the ALS
\begin{displaymath}
\begin{bmatrix}
A_f & A_f^{\ell}\otimes a & -A_f u_f \trp u_g  & . & . \\
. & A_f & . & . & -v_f u_g \\
. & . & A_f & -v_f u_g & . \\
. & . & . & A_g & A_g^{\ell}\otimes a \\
. & . & . & . & A_g 
\end{bmatrix}
s =
\begin{bmatrix}
. \\ . \\ . \\ . \\ v_g
\end{bmatrix}.
\end{displaymath}
Now we can add block row~3 to block row~1 and
eliminate the remaining columns in block $(1,3)$ by
the columns $\{ 2, 3, \ldots, n_f \}$ from block $(1,1)$, remove
block row/column~3 to get the ALS
\begin{displaymath}
\begin{bmatrix}
A_f & A_f^{\ell}\otimes a & -v_f u_g & . \\
. & A_f & . & -v_f u_g \\
. & . & A_g & A_g^{\ell}\otimes a \\
. & . & . & A_g 
\end{bmatrix}
s =
\begin{bmatrix}
. \\ . \\ . \\ v_g
\end{bmatrix}.
\end{displaymath}
Swapping block rows~2 and~3 and block columns~2 and~3 yields the ALS
\begin{displaymath}
\begin{bmatrix}
A_f & -v_f u_g & A_f^{\ell}\otimes a & 0 \\
. & A_g & . & A_g^{\ell}\otimes a \\
. & . & A_f & -v_f u_g \\
. & . & . & A_g
\end{bmatrix}
s =
\begin{bmatrix}
. \\ . \\ . \\ v_g
\end{bmatrix}
\end{displaymath}
of the left hand side $\ncpd_{x|0} (fg)$.
Notice the upper right zero in the system matrix
which is because of $x \neq 1$.
\end{proof}

\begin{definition}[Partial Derivative]\label{def:fd.ncpd}
Let $f \in \field{F}$, $x \in \alphabet{X}$ and $a \in \alphabet{X} \setminus \{ x\}$.
The element $\ncpd_{x} f := \ncpd_{x|1} f$ is called
\emph{partial derivative} of $f$.
The element $\ncpd_{x|a} f$ is called
(partial) \emph{directional derivative} of $f$ (with respect to $a$).
\end{definition}

\begin{theorem}[Free Derivation]\label{thr:fd.ncpd}
Let $x \in \alphabet{X}$. Then the (partial) \emph{free derivation}
$\ncpd_x: \field{F} \to \field{F} = \freeFLD{\field{K}}{\alphabet{X}}$
is the \emph{unique} map with the properties
\begin{itemize}
\item $\ncpd_x h = 0$ for all
  $h \in \freeFLD{\field{K}}{\alphabet{X}\!\setminus\!\{ x \}}$,
\item $\ncpd_x x = 1$, and
\item $\ncpd_x (fg) = \ncpd_x f \, g + f\, \ncpd_x g$ for all
  $f,g \in \field{F} = \freeFLD{\field{K}}{\alphabet{X}}$.
\end{itemize}
\end{theorem}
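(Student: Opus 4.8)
The plan is to establish existence first and then uniqueness. For existence, I would simply verify that the map $\ncpd_x := \ncpd_{x|1}$, already constructed on the level of admissible linear systems in Definition~\ref{def:fd.forder} and shown to be well-defined (independent of the chosen ALS) by Corollary~\ref{cor:fd.forder}, satisfies the three listed properties. The product rule is exactly Lemma~\ref{lem:fd.product} with $a = 1$ (note $x \in \alphabet{X}$, so $x \neq 1$ as required). For $\ncpd_x x = 1$: take the obvious $1$-dimensional ALS $\als{A} = (1, 1 - x, 1)$ for $x$ (so $A_0 = 1$, $A_\ell = -1$, all other $A_k = 0$), form $\ncpd_{x|1}\als{A}$ according to Definition~\ref{def:fd.alsfor}, and read off the first component of the solution vector; the resulting $2$-dimensional system gives $s_1 = 1$. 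For the vanishing property, let $h \in \freeFLD{\field{K}}{\alphabet{X}\setminus\{x\}}$ and pick any ALS $\als{A} = (u, A, v)$ for $h$ with all coefficient matrices supported on $\{1\} \cup (\alphabet{X}\setminus\{x\})$; then $A_\ell = 0$, so $\ncpd_{x|1}\als{A}$ has the block $A_\ell \otimes x_k = 0$ in the upper right, hence decouples into two copies of $\als{A}$ with $u$ applied to the zero block, and its first solution component is $0$.

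For uniqueness, suppose $D : \field{F} \to \field{F}$ is any map satisfying the three properties. The key observation is that the three properties force $D$ to be $\field{K}$-linear: from the product rule with $g$ a scalar (viewed via the vanishing property, since $\field{K} = \freeFLD{\field{K}}{\emptyset} \subseteq \freeFLD{\field{K}}{\alphabet{X}\setminus\{x\}}$) we get $D(f\alpha) = D(f)\alpha$, and additivity I would extract in the standard way by expanding $D((f_1+f_2)(g_1+g_2))$ two ways using the product rule, or more directly by noting that any derivation on a $\field{K}$-algebra annihilating $\field{K}$ is automatically additive once one also has it on the field of fractions. Then $D$ and $\ncpd_x$ agree on $\field{K}$ (both zero) and on each letter (on $x$ both give $1$; on $y \in \alphabet{X}\setminus\{x\}$ both give $0$ by the vanishing property), hence by the product rule and linearity they agree on all of $\freeALG{\field{K}}{\alphabet{X}}$, since monomials are products of letters.

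It remains to pass from the free algebra to the free field $\field{F}$. The bridge is that any $f \in \field{F}$ arises from polynomials by the rational operations, and the critical case is the inverse: if $f \in \field{F}^\times$, differentiating $f f\inv = 1$ with the product rule gives $D(f) f\inv + f\, D(f\inv) = D(1) = 0$ (using $1 \in \field{K}$), whence $D(f\inv) = -f\inv D(f) f\inv$ is determined by $D(f)$. By induction on the number of rational operations used to build a given element of $\field{F}$ from $\freeALG{\field{K}}{\alphabet{X}}$, $D$ is completely determined, so $D = \ncpd_x$.

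The main obstacle I expect is the inductive argument on $\field{F}$: one must be careful that "number of rational operations to build $f$" is not intrinsic to $f$, so the cleanest formulation is to argue that the set $S = \{\,f \in \field{F} : D(f) = \ncpd_x f\,\}$ contains $\field{K}$ and $\alphabet{X}$ and is closed under addition, multiplication, and inversion (the last requiring $f \in S \cap \field{F}^\times \Rightarrow f\inv \in S$ via the identity just derived, which needs $D(f) = \ncpd_x f$ to conclude $D(f\inv) = -f\inv D(f) f\inv = -f\inv \ncpd_x(f) f\inv = \ncpd_x(f\inv)$); since $\field{F}$ is generated as a field by $\field{K}\langle\alphabet{X}\rangle$ and that in turn by $\field{K} \cup \alphabet{X}$ under rational operations, $S = \field{F}$. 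A secondary subtlety worth a sentence is confirming additivity of an arbitrary $D$ satisfying the hypotheses, rather than silently assuming it.
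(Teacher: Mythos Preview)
Your existence argument matches the paper's: verify the three properties directly from the ALS construction, citing Lemma~\ref{lem:fd.product} (with $a=1$) for the product rule and reading off the block structure for the vanishing property. One small slip: there is no $1$-dimensional ALS for $x$; its rank is~$2$, and the paper uses the system $\bigl[\begin{smallmatrix}1&-x\\0&1\end{smallmatrix}\bigr]s=\bigl[\begin{smallmatrix}0\\1\end{smallmatrix}\bigr]$, so $\ncpd_{x|1}\als{A}$ is the $4\times 4$ ALS displayed in the paper's proof, which one then minimizes to read off $\ncpd_x x=1$. This does not affect your strategy.

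For uniqueness your route is genuinely different. The paper argues in one line via $\ncpd_x(xf)=f+x\,\ncpd_x f$ and the analogous expansion for $\ncpd'_x$, concluding $x(\ncpd_x f-\ncpd'_x f)=0$. You instead show that $S=\{f\in\field{F}:\ncpd_x f=\ncpd'_x f\}$ contains $\field{K}\cup\alphabet{X}$ and is closed under $+$, $\cdot$, and $(\cdot)^{-1}$ (the last via $D(f^{-1})=-f^{-1}D(f)f^{-1}$), and then invoke that the free field is generated over $\field{K}$ by $\alphabet{X}$ under the rational operations. Your argument is the standard one for propagating a derivation identity from a ring to its (epic) skew field of fractions and makes each closure step explicit; the paper's argument is much shorter but, as literally written, the displayed chain already assumes $\ncpd_x f=\ncpd'_x f$ at the middle equality, so your version is the more transparently complete of the two.

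You are also right to isolate additivity as a real issue, but your proposed fix does not work: expanding $D\bigl((f_1+f_2)(g_1+g_2)\bigr)$ ``two ways'' already requires additivity to split the sum on the inside, so the argument is circular, and the three listed properties constrain $D$ only on products, not on sums. In effect both the paper's argument and yours need the competing map $\ncpd'_x$ to be at least additive (yours to close $S$ under $+$; the paper's to make sense of the difference $\ncpd_x-\ncpd'_x$). The honest reading of the theorem is ``the unique $\field{K}$-linear map'' (equivalently, the unique derivation) with these properties; if you want the statement for arbitrary set maps, you should add that hypothesis explicitly rather than try to derive it.
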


\begin{proof}
Let $h$ be given by the ALS $\als{A} = (u,A,v)$ and let
$\ell \in \{ 1, 2, \ldots, d \}$ such that $x = x_{\ell}$.
We just need to recall
the ALS for $\ncpd_x h$,
\begin{displaymath}
\begin{bmatrix}
A & A_{\ell}\otimes 1 \\
. & A
\end{bmatrix}
\begin{bmatrix}
s' \\ s''
\end{bmatrix}
=
\begin{bmatrix}
. \\ v
\end{bmatrix}
\end{displaymath}
and observe that $A_{\ell} = 0$ and thus $As'=0$,
in particular the first component of $s'$.
Therefore $\ncpd_x h = 0$.
For $\ncpd_x x = 1$ we need to minimize
\begin{displaymath}
\begin{bmatrix}
1 & -x & . & -1 \\
. & 1 & . & . \\
. & . & 1 & -x \\
. & . & . & 1 
\end{bmatrix}
s = 
\begin{bmatrix}
. \\ . \\ . \\ 1
\end{bmatrix}.
\end{displaymath}
And the product rule
$\ncpd_x (fg) = \ncpd_x f \, g + f\, \ncpd_x g$
is due to Lemma~\ref{lem:fd.product}.
For the uniqueness we assume that there exists another
$\ncpd'_x : \field{F} \to \field{F}$ with the same properties.
From the product rule we obtain
$\ncpd_x(x f) = f + x\, \ncpd_x f = f + x\, \ncpd'_x f = \ncpd'_x(x f)$,
that is, $x(\ncpd_x f - \ncpd'_x f) = 0$ for \emph{all} $f \in \field{F}$,
thus $\ncpd_x = \ncpd'_x$.
\end{proof}

\begin{corollary}[Hausdorff Derivation
\cite{Rota1980a}
]\label{cor:fd.hausdorff}
Let $x \in \alphabet{X}$. Then
$\ncpd_x \kappa =0$ for all $\kappa \in \field{K}$,
$\ncpd_x y=0$ for all $y \in \alphabet{X} \setminus \{ x \}$,
$\ncpd_x x = 1$, and
$\ncpd_x(fg) = \ncpd_x f\, g + f\, \ncpd_x g$
for all $f,g \in \freeALG{\field{K}}{\alphabet{X}}$.
\end{corollary}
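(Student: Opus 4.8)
The plan is to derive Corollary~\ref{cor:fd.hausdorff} as a direct specialization of Theorem~\ref{thr:fd.ncpd}, using only the fact that $\field{K}$ and $\alphabet{X}\setminus\{x\}$ sit inside the relevant subfield. First I would observe that $\field{K} \subseteq \freeALG{\field{K}}{\alphabet{X}\setminus\{x\}} \subseteq \freeFLD{\field{K}}{\alphabet{X}\setminus\{x\}}$, so every scalar $\kappa \in \field{K}$ and every letter $y \in \alphabet{X}\setminus\{x\}$ belongs to $\freeFLD{\field{K}}{\alphabet{X}\setminus\{x\}}$; applying the first bullet of Theorem~\ref{thr:fd.ncpd} immediately gives $\ncpd_x \kappa = 0$ and $\ncpd_x y = 0$. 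The equation $\ncpd_x x = 1$ is literally the second bullet, and the product rule $\ncpd_x(fg) = \ncpd_x f\, g + f\, \ncpd_x g$ for all $f, g \in \freeALG{\field{K}}{\alphabet{X}} \subseteq \field{F}$ is the restriction of the third bullet to the subring $\freeALG{\field{K}}{\alphabet{X}}$.

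The only thing that genuinely needs a word of justification is that the operator $\ncpd_x$ of Theorem~\ref{thr:fd.ncpd}, when restricted to $\freeALG{\field{K}}{\alphabet{X}}$, still lands in $\freeALG{\field{K}}{\alphabet{X}}$ (so that we recover the ``Hausdorff derivative'' on nc polynomials rather than merely a map into the free field). I would handle this by induction on words: $\ncpd_x$ kills scalars and maps each letter into $\field{K} \subseteq \freeALG{\field{K}}{\alphabet{X}}$, and the product rule propagates membership in $\freeALG{\field{K}}{\alphabet{X}}$ from shorter words to longer ones; since $\freeALG{\field{K}}{\alphabet{X}}$ is spanned over $\field{K}$ by words and $\ncpd_x$ is linear (Corollary~\ref{cor:fd.linear}), it follows that $\ncpd_x\bigl(\freeALG{\field{K}}{\alphabet{X}}\bigr) \subseteq \freeALG{\field{K}}{\alphabet{X}}$.

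I do not expect any real obstacle here — the corollary is a straightforward unwinding of the theorem together with the inclusion $\freeALG{\field{K}}{\alphabet{Y}} \subseteq \freeFLD{\field{K}}{\alphabet{Y}}$ for $\alphabet{Y} = \alphabet{X}\setminus\{x\}$, which is part of the standing setup in Section~\ref{sec:fd.intro}. If one wants a self-contained argument for the Hausdorff case that does not route through the free field at all, one can instead verify the four properties directly on the monomial basis $\alphabet{X}^*$ using Definition~\ref{def:fd.alsfor} (the explicit $2n$-dimensional block ALS) and the worked example $p = x^3$, but invoking Theorem~\ref{thr:fd.ncpd} is the shortest route and is what I would write.
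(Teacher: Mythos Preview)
Your proposal is correct and matches the paper's intent: the corollary is stated immediately after Theorem~\ref{thr:fd.ncpd} with no separate proof, so it is meant to be read exactly as you do---a restriction of the three bullets of the theorem via the inclusions $\field{K} \subseteq \freeALG{\field{K}}{\alphabet{X}\setminus\{x\}} \subseteq \freeFLD{\field{K}}{\alphabet{X}\setminus\{x\}}$ and $\freeALG{\field{K}}{\alphabet{X}} \subseteq \field{F}$. Your additional remark that $\ncpd_x$ preserves $\freeALG{\field{K}}{\alphabet{X}}$ (by linearity and induction on word length) is a helpful supplement the paper leaves implicit.
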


\begin{remark}
More general
\cite[Theorem~7.5.17]{Cohn2006a}
: ``Any derivation of a Sylvester domain extends to a derivation
of its universal field of fractions.''
Recall however that the \emph{cyclic derivative}
is \emph{not} (from) a derivation
\cite[Section~1]{Rota1980a}
. For a discussion of cyclic derivatives of nc algebraic power series
we refer to \cite{Reutenauer1983a}
.
\end{remark}

\begin{proposition}
Let $f \in \field{F}$, $x,y \in \alphabet{X}$
and $a,b \in \alphabet{X} \setminus \{ x, y \}$
with $a=b$ if and only if $x=y$.
Then $\ncpd_{x|a} (\ncpd_{y|b} f) = \ncpd_{y|b} (\ncpd_{x|a} f)$,
that is, the (partial) derivations $\ncpd_{x|a}$ and $\ncpd_{y|b}$
\emph{commute}.
\end{proposition}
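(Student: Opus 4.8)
The plan is to fix an arbitrary ALS $\als{A}=(u,A,v)$ of $f$ (Definition~\ref{def:fd.als}), to build the two iterated \emph{formal} derivatives $\ncpd_{x|a}(\ncpd_{y|b}\als{A})$ and $\ncpd_{y|b}(\ncpd_{x|a}\als{A})$ explicitly from Definition~\ref{def:fd.alsfor}, and to exhibit an admissible transformation carrying one into the other. Since both are admissible linear systems for the respective iterated derivatives --- well-definedness on the level of elements being Corollary~\ref{cor:fd.forder} --- the asserted identity then follows at once. Throughout, let $\ell_1\neq k_1$ and $\ell_2\neq k_2$ in $\{0,1,\ldots,d\}$ denote the indices of $x,a$ respectively $y,b$; the hypothesis $a,b\in\alphabet{X}\setminus\{x,y\}$ gives in particular $k_2\neq\ell_1$ (from $b\neq x$) and $k_1\neq\ell_2$ (from $a\neq y$), and these are the only points at which the ``mixed'' exclusions are used.

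First I would record that $\ncpd_{y|b}\als{A}$ has as its system matrix the block-upper-triangular $B$ with both diagonal blocks equal to $A$ and upper-right block $A_{\ell_2}\otimes x_{k_2}$, and that the coefficient matrix $B_{\ell_1}$ (of $x_{\ell_1}$) in $B$ is \emph{block-diagonal}, with diagonal blocks $A_{\ell_1}$ --- precisely because $\ell_1\neq k_2$ rules out an off-diagonal $A_{\ell_2}$. Applying $\ncpd_{x|a}$ (again Definition~\ref{def:fd.alsfor}) therefore yields the system
\begin{displaymath}
C=\begin{bmatrix}
A & A_{\ell_2}\otimes x_{k_2} & A_{\ell_1}\otimes x_{k_1} & . \\
. & A & . & A_{\ell_1}\otimes x_{k_1} \\
. & . & A & A_{\ell_2}\otimes x_{k_2} \\
. & . & . & A
\end{bmatrix},
\end{displaymath}
with $u$-part $[\,u,0,0,0\,]$ and right-hand side $[\,0,0,0,v\,]\trp$. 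Exchanging the roles of $(\ell_1,k_1)$ and $(\ell_2,k_2)$ in the identical computation (this time $\ell_2\neq k_1$ supplies the block-diagonality) produces the system matrix $C'$ of $\ncpd_{y|b}(\ncpd_{x|a}\als{A})$, which differs from $C$ only in that $A_{\ell_1}\otimes x_{k_1}$ and $A_{\ell_2}\otimes x_{k_2}$ change places throughout, the $u$-part and right-hand side being literally the same.

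It then remains to observe that $C'$ is obtained from $C$ by \emph{simultaneously} interchanging block rows~2 and~3 and block columns~2 and~3 --- a short verification on the (four) relevant off-diagonal blocks. Since blocks~2 and~3 of both $[\,u,0,0,0\,]$ and $[\,0,0,0,v\,]\trp$ vanish, this block transposition leaves them fixed, so $\ncpd_{y|b}(\ncpd_{x|a}\als{A})=P\,\ncpd_{x|a}(\ncpd_{y|b}\als{A})\,P\trp$, where $P$ is the permutation matrix that swaps blocks~2 and~3. As $P$ is symmetric (an involution) and fixes the first block, the pair $(P,P\trp)$ is an admissible transformation in the sense of Definition~\ref{def:fd.als}; hence the two systems represent one and the same element of $\field{F}$, that is, $\ncpd_{x|a}(\ncpd_{y|b}f)=\ncpd_{y|b}(\ncpd_{x|a}f)$. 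When $x=y$ one has $a=b$ and the statement is trivial --- and the computation above degenerates consistently, $C$ being already symmetric under the swap of blocks~2 and~3.

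The argument is essentially bookkeeping, in the style of the proofs of Lemma~\ref{lem:fd.alstoele} and Lemma~\ref{lem:fd.product}. The one step deserving genuine care is checking that, after the first differentiation, the coefficient matrix of the enlarged system belonging to the letter of the \emph{second} differentiation is block-diagonal; this is exactly what the exclusions $a\neq y$ and $b\neq x$ buy, and once it is in place the matching of $C$ and $C'$ via the block transposition is routine. I do not anticipate any conceptual obstacle.
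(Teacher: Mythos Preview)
Your proposal is correct and follows essentially the same route as the paper: write out both iterated formal derivatives as $4\times 4$ block systems and observe that swapping block rows/columns~2 and~3 carries one into the other. If anything, you are more explicit than the paper about \emph{why} the $(1,4)$ block of $C$ vanishes (namely the exclusions $b\neq x$ and $a\neq y$), a point the paper leaves implicit in simply writing a~$0$ there.
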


\begin{proof}
Let $l,k \in \{ 1, 2, \ldots, d \}$ the indices of $x$ resp.~$y$.
There is nothing to show for the trivial case $x=y$,
thus we can assume $l \neq k$.
Let $f$ be given by the admissible linear system $\als{A} = (u,A,v)$.
Then the ``left'' ALS $\ncpd_{x|a} (\ncpd_{y|b} \als{A})$ is
\begin{displaymath}
\begin{bmatrix}
A & A_k \otimes b & A_l \otimes a & 0 \\
. & A & 0 & A_l \otimes a \\
. & . & A & A_k \otimes b \\
. & . & . & A
\end{bmatrix}
s = 
\begin{bmatrix}
. \\ . \\ . \\ v
\end{bmatrix}.
\end{displaymath}
Swapping block rows/columns~2 and~3 yields
the ``right'' ALS $\ncpd_{y|b}(\ncpd_{x|a} \als{A})$:
\begin{displaymath}
\begin{bmatrix}
A & A_l \otimes a & A_k \otimes b  & 0 \\
. & A & 0 & A_k \otimes b \\
. & . & A & A_l \otimes a \\
. & . & . & A
\end{bmatrix}
s = 
\begin{bmatrix}
. \\ . \\ . \\ v
\end{bmatrix}.
\end{displaymath}
\end{proof}

Since there is no danger of ambiguity, we can
define the (free) ``higher'' derivative of $f \in \field{F}$
as $\ncpd_w f$ for each word $w$ in the \emph{free monoid}
$\alphabet{X}^*$ with the ``trivial'' derivative
$\ncpd_{(1)} f = f$.
Let $w \in \alphabet{X}^*$ and $\sigma(w)$ denote
any permutation of the letters of $w$.
Then $\ncpd_w f = \ncpd_{\sigma(w)} f$.

The proof for nc formal power series in 
\cite[Proposition~1.8]{Popescu2006a}
\ is based on words (monomials), that is,
$\ncpd_x (\ncpd_y w ) = \ncpd_y (\ncpd_x w) \in \freeALG{\field{K}}{\alphabet{X}}$.
Recall that one gets the (nc) \emph{rational} series by
intersecting the (nc) series and the \emph{free field}
\cite[Section~9]{Reutenauer2008a}
:
\begin{displaymath}
\field{K}\langle\!\langle \alphabet{X}\rangle\!\rangle
  \cap \freeFLD{\field{K}}{\alphabet{X}}
= \field{K}^{\text{rat}}\langle\!\langle \alphabet{X}\rangle\!\rangle.
\end{displaymath}

Overall, (free) nc derivation does not appear that often
in the literature. And when there is some discussion
it is (almost) always connected with ``not simple''
\cite[Section~1]{Rota1980a}
, ``complicated'' 
\cite[Section~14.3]{Hackbusch2009a}
, etc. This is however \emph{not} due to the Hausdorff derivation
but to the use of (finite) formal series as representation
(for nc polynomials). Using \emph{linear representations}
in the sense of Cohn and Reutenauer 
\cite{Cohn1994a}
\ for elements in the free field $\field{F}$ can even
reveal additional structure, as indicated in Example~\ref{ex:fd.matfact}
(below).
For the somewhat more ``complicated'' example
\begin{align*}
\tilde{p}
  &= 3 c y x b + 3 x b y x b + 2 c y x a x + c y b x b - c y a x b
      - 2 x b y x a x + 4 x b y b x b\\
  &\quad - 3 x b y a x b + 3 x a x y x b  - 3 b x b y x b + 6 a x b y x b
      + 2 x a x y x a x + x a x y b x b\\
  &\quad - x a x y a x b  - 2 b x b y x a x  - b x b y b x b + b x b y a x b
      + 5 a x b y b x b - 4 a x b y a x b
\end{align*}
from \cite[Section~8.2]{Camino2006a}
\ we refer to \cite[Example~3.7]{Schrempf2019a}
.

\begin{Example}\label{ex:fd.matfact}
Let $p = xyzx$. A (minimal) polynomial ALS for $p$ is
\begin{displaymath}
\begin{bmatrix}
1 & -x & . & . & . \\
. & 1 & -y & . & . \\
. & . & 1 & -z & . \\
. & . & . & 1 & -x \\
. & . & . & . & 1
\end{bmatrix}
s=
\begin{bmatrix}
. \\ . \\ . \\ . \\ 1
\end{bmatrix}.
\end{displaymath}
Then $\ncpd_x p = xyz + yzx$ admits a factorization into matrices
\cite[Section~3]{Schrempf2019a}
:
\begin{displaymath}
\ncpd_x p =
\begin{bmatrix}
x & y
\end{bmatrix}
\begin{bmatrix}
y & . \\
. & z
\end{bmatrix}
\begin{bmatrix}
z \\ x
\end{bmatrix}.
\end{displaymath}
A \emph{minimal} ALS for $\ncpd_x p$ is
\begin{displaymath}
\begin{bmatrix}
1 & -x & -y & . & . & . \\
. & 1 & . & -y & 0 & . \\
. & . & 1 & 0 & -z & . \\
. & . & . & 1 & . & -z \\
. & . & . & . & 1 & -x \\
. & . & . & . & . & 1
\end{bmatrix}
s =
\begin{bmatrix}
. \\ . \\ . \\ . \\ . \\ 1
\end{bmatrix}.
\end{displaymath}
\end{Example}

\medskip
\begin{example}
We will use the \emph{directional derivative}
later in Section~\ref{sec:fd.newton} for the (nc)
Newton iteration. For $q = x^2$ we get
the ``Sylvester equation'' $\ncpd_{x|a} q = xa + ax$
which is \emph{linear} in $a$.
\end{example}

\medskip
In (the next) Section~\ref{sec:fd.comp}
we have a look on the
``free'' chain rule which will turn out to be
very elegant.
We avoid the term ``function'' here since one needs
to be careful with respect to evaluation
(domain of definition), e.g.~$f=(xy-yx)\inv$ is
not defined for diagonal matrices.
For the efficient evaluation of polynomials (by matrices)
one can use \emph{Horner Systems}
\cite{Schrempf2019a}
. A generalization to elements in free fields
is considered in future work.
If there is a ``compositional structure'' available
(in admissible linear systems) it could be used to
further optimize evaluation.

\begin{remark}
For details on minimization (of linear representations)
we refer to \cite{Schrempf2018a9}
. Notice in particular that the construction
(of the formal derivative) in Definition~\ref{def:fd.alsfor}
\emph{preserves} refined pivot blocks.
Therefore, if $\als{A}$ is refined,
\emph{linear} (algebraic) techniques
suffice for minimization of $\ncpd_x \als{A}$.
\end{remark}

\medskip
Last but not least, given the alphabet
$\alphabet{X} = \{ x_1, x_2, \ldots, x_d \}$,
we can define the ``free'' (canonical) \emph{gradient}
$\Grad f = [\ncpd_1 f, \ncpd_2 f, \ldots, \ncpd_d f]\trp
= [\ncpd_{x_1} f, \ncpd_{x_2} f, \ldots, \ncpd_{x_d} f]\trp \in \field{F}^d$
for some $f \in \freeFLD{\field{K}}{\alphabet{X}}$.
\emph{Cyclic} gradients are discussed in
\cite{Voiculescu2000b}
.

And for a ``vector valued'' element $\mathbf{f} = (f_1,f_2,\ldots,f_d)$
we can define the \emph{Jacobian matrix}
$J(\mathbf{f}) = (\ncpd_j f_i)_{i,j=1}^d = (\ncpd_{x_j} f_i)_{i,j=1}^d$.
For a discussion of non-commutative Jacobian matrices
on the level of nc formal power series
we refer to \cite{Reutenauer1992a}
.

\newpage
\section{Free Composition}\label{sec:fd.comp}

\begin{figure}
\begin{center}
$\xymatrix@C=0em{  &
{\als{A}_g \sim g \in \field{F}_{\alphabet{Y}}}
    \ar[dl]_{\circ \mathbf{f}}
    \ar[dr]^{\ncpd_{\mathbf{y}|\mathbf{y}'}} \\
{\tilde{\als{A}}_h \sim h \in \field{F}_{\alphabet{X}}}
    \ar[d]_{\text{lin.}} & &
{\als{A}'_g \sim g' \in \field{F}_{\alphabet{Y} \cup \alphabet{Y}'}}
    \ar[d]^{\circ (\mathbf{f},\mathbf{f}')} \\
{\als{A}_h \sim h \in \field{F}_{\alphabet{X}}}
    \ar[dr]_{\ncpd_x } & &
{\tilde{\als{A}}'_h \sim h' \in \field{F}_{\alphabet{X}}}
    \ar[dl]^{\text{lin.}} \\
& {\ncpd_x \als{A}_h \sim \ncpd_x h = h' \sim \als{A}_h'}
}$
\end{center}
\caption{Let $\mathbf{f}=(f_1,\ldots,f_d)$ with
  $f_i \in \field{F}_{\alphabet{X}}=\freeFLD{\field{K}}{\alphabet{X}}$
  given by the $d$-tuple of admissible linear systems
  $\als{A}_{\mathbf{f}} = (\als{A}_1, \ldots, \als{A}_d)$
  and $g \in \field{F}_{\alphabet{Y}}$ given by $\als{A}_g=(u_g,A_g,v_g)$
  such that the system matrix $A_g$ remains \emph{full} when we replace
  each letter $y_i \in \alphabet{Y}$ by the respective element $f_i$,
  written as $A_g \circ \mathbf{f}$.
  Then $h = g \circ \mathbf{f} \in \field{F}_{\alphabet{X}}$ is defined by the
  \emph{admissible system} $\tilde{\als{A}}_h = (u_g, A_g \circ \mathbf{f}, v_g)$
  which we \emph{linearize} to obtain an ALS $\als{A}_h$ for $h$
  before we apply the (partial) derivation $\ncpd_x$ (left path).
  On the other hand, we can apply the (directional) derivation
  $\ncpd_{\mathbf{y}|\mathbf{y}'}$ by going over to the free field
  $\field{F}_{\alphabet{Y} \cup \alphabet{Y}'}
    = \freeFLD{\field{K}}{\alphabet{Y} \cup \alphabet{Y}'}$
  with an extended alphabet (with ``placeholders'' $y_i'$), yielding
  $g' = \ncpd_{y_1|y_1'} g + \ldots + \ncpd_{y_d|y_d'} g$
  given by some ALS $\als{A}'_g=(u'_g,A'_g,v'_g)$.
  Then $h' = g' \circ (\mathbf{f},\mathbf{f}') \in \field{F}_{\alphabet{X}}$
  is defined by the \emph{admissible system}
  $\tilde{\als{A}}_h' = \bigl(u'_g,A'_g \circ (\mathbf{f},\mathbf{f}'),v'_g\bigr)$,
  where also each letter $y_i'\in \alphabet{Y}'$ is replaced by the respective
  element $f_i' = \ncpd_x f_i \in \field{F}_{\alphabet{X}}$.
  After linearization we get an ALS $\als{A}'_h$ such that
  $\ncpd_x \als{A}_h - \als{A}'_h = 0$,
  that is, $\ncpd_x h = h'$ (right path).
  }
\label{fig:fd.chain}
\end{figure}

To be able to formulate a (partial) ``free'' \emph{chain rule} in an elegant way,
we need a suitable notation. It will turn out that Cohn's
\emph{admissible systems} \cite[Section~7.1]{Cohn2006a}
\ provide the perfect framework for the ``expansion'' of
letters by elements from another free field.

First we recall the ``classical'' (analytical) chain rule:
Let $X$, $Y$ and $Z$ be (open) sets, $f=f(x)$ differentiable on $X$,
$g=g(y)$ differentiable on $Y$ and $h=h(x)=g\bigl(f(x)\bigr)$.
Then $\frac{\dd}{\dd x} h = \frac{\dd}{\dd f}g\,\frac{\dd}{\dd x} f$
resp.~$h'(x) = g'\bigl(f(x)\bigr) f'(x)$.
\begin{displaymath}
\xymatrix{
  {X} \ar[r]^{f} \ar@/^-3ex/[rr]_{h}
& {Y} \ar[r]^{g}
& {Z} 
}
\end{displaymath}

\begin{notation}
For a fixed $d \in \numN$ let
$\alphabet{X} = \{ x_1, x_2, \ldots, x_d \}$,
$\alphabet{Y} = \{ y_1, y_2, \ldots, y_d \}$ and
$\alphabet{Y}' = \{ y_1', y_2', \ldots, y_d' \}$
be \emph{pairwise disjoint} alphabets, that is,
$\alphabet{X} \cap \alphabet{Y} =
\alphabet{X} \cap \alphabet{Y}' =
\alphabet{Y} \cap \alphabet{Y}' = \emptyset$.
By $\field{F}_{\alphabet{Z}}$ we denote the free field
$\freeFLD{\field{K}}{\alphabet{Z}}$. 
Let $A \in \freeALG{\field{K}}{\alphabet{Y}}^{n \times n}$ be a \emph{linear full}
matrix and $\mathbf{f}=(f_1,f_2,\ldots,f_d)$ a $d$-tuple of elements
$f_i \in \field{F}_{\alphabet{X}}$. By $A \circ \mathbf{f}$
we denote the (not necessarily full) $n \times n$ matrix over
$\field{F}_{\alphabet{X}}$ where each letter $y_i \in \alphabet{Y}$
is replaced by the corresponding $f_i \in \field{F}_{\alphabet{X}}$,
that is,
\begin{displaymath}
A \circ \mathbf{f}
  = A_0 \otimes 1 + A_1 \otimes f_1 + \ldots + A_d \otimes f_d
\quad\in \field{F}_{\alphabet{X}}^{n \times n}.
\end{displaymath}
We write $A \circ \als{A}_{\mathbf{f}}
  = A \circ (\als{A}_{f_1}, \als{A}_{f_2}, \ldots, \als{A}_{f_d})$
for a \emph{linearized} version induced by the
$d$-tuple of admissible linear systems $\als{A}_{f_i} = (u_{f_i},A_{f_i},v_{f_i})$.
\end{notation}

\medskip
Now let $g \in \field{F}_{\alphabet{Y}}$ be given by the ALS
$\als{A}_g = (u_g,A_g,v_g)$ and
$\mathbf{f}=(f_1,f_2,\ldots,f_d)\in \field{F}_{\alphabet{X}}^d$
such that $A_g \circ \mathbf{f}$ is \emph{full}.
Then (the \emph{unique} element)
$h = g \circ \mathbf{f} \in \field{F}_{\alphabet{X}}$ is
defined by the \emph{admissible system}
$\tilde{\als{A}}_h = (u_g, A_g\circ \mathbf{f}, v_g)$
and we write
\begin{displaymath}
\als{A}_h = (u_h,A_h,v_h) = 
  \bigl(u_g \circ \als{A}_{\mathbf{f}},
        A_g \circ \als{A}_{\mathbf{f}},
        v_g \circ \als{A}_{\mathbf{f}}\bigr)
=: \als{A}_g \circ \als{A}_{\mathbf{f}}
\end{displaymath}
for a \emph{linearized} version using
``linearization by enlargement''
\cite[Section~5.8]{Cohn2006a}
.
(For details we refer to the proof of Proposition~\ref{pro:fd.chainrule} below.)
Fixing some $x \in \alphabet{X}$,
we get the (partial) derivative $\ncpd_x h$ of $h$
via the derivative $\ncpd_x \als{A}_h$
\begin{displaymath}
\begin{bmatrix}
A_h & A_h^x \otimes 1 \\
. & A_h
\end{bmatrix}
s = 
\begin{bmatrix}
. \\ v_h
\end{bmatrix}.
\end{displaymath}
To give a meaning to the right hand side of
$\ncpd_x h = \ncpd_x (g \circ \mathbf{f})$,
we introduce the ``total'' (directional) derivative
$g' := \ncpd_{\mathbf{y}|\mathbf{y}'} g
  = \ncpd_{y_1|y_1'} g + \ncpd_{y_2|y_2'} g + \ldots + \ncpd_{y_d|y_d'} g
  \in\field{F}_{\alphabet{Y} \cup \alphabet{Y}'}$
given by the ALS
\begin{displaymath}
\als{A}_g' :=
\ncpd_{\mathbf{y}|\mathbf{y}'} \als{A}_g = \left(
\begin{bmatrix}
u_g & . 
\end{bmatrix},
\begin{bmatrix}
A_g & \sum_{i=1}^d A_g^{(i)}\otimes y_i' \\
. & A_g
\end{bmatrix},
\begin{bmatrix}
. \\ v_g
\end{bmatrix}
\right)
\end{displaymath}
with letters $y_i' \in \alphabet{Y}'$.
Using a similar notation for the derivatives
\begin{displaymath}
\mathbf{f}' = (f_1',f_2',\ldots,f_d')
  := (\ncpd_x f_1, \ncpd_x f_2, \ldots, \ncpd_x f_d) = \ncpd_x \mathbf{f},
\end{displaymath}
we can write
\begin{equation}\label{eqn:fd.chain}
h' := \ncpd_x(g \circ \mathbf{f}) 
  = g' \circ (\mathbf{f},\mathbf{f}')
  = \ncpd_{\mathbf{y}|\mathbf{y}'} g \circ (\mathbf{f},\mathbf{f}')
\end{equation}
given by the \emph{admissible system}
$\als{A}_g' \circ  (\mathbf{f},\mathbf{f}')
  = \ncpd_{\mathbf{y}|\mathbf{y}'} \als{A}_g \circ (\mathbf{f},\mathbf{f}')$.
After an illustration in the following example,
we show in Proposition~\ref{pro:fd.chainrule} that indeed
\begin{displaymath}
\ncpd_x h = h' = \ncpd_x(g \circ \mathbf{f})
  = \ncpd_{\mathbf{y}|\ncpd_x} g \circ \mathbf{f}
\end{displaymath}
using an abbreviation for the right hand side of \eqref{eqn:fd.chain}.
For an overview see Figure~\ref{fig:fd.chain}.

\medskip
\begin{remark}
Cohn writes an \emph{admissible system} $\als{A} = (u,A,v)$
as ``block'' $[A,v]$ with not necessarily scalar column~$v$
\cite[Section~7.1]{Cohn2006a}
. A ring homomorphism which preserves fullness of matrices is
called \emph{honest} \cite[Section~5.4]{Cohn2006a}
.
\end{remark}

\begin{remark}
It is \emph{crucial} that $A_g \circ \mathbf{f}$ is \emph{full}
to be able to define the composition.
On a purely algebraic level this is sufficient to
define the (partial) chain rule.
For a $d$-tuple $\mathbf{g}=(g_1,g_2,\ldots,g_d)$ given by
admissible linear systems
$\als{A}_{\mathbf{g}}=(\als{A}_{g_1},\als{A}_{g_2},\ldots,\als{A}_{g_d})$
with $\als{A}_{g_i} =(u_{g_i},A_{g_i},v_{g_i})$ we
need $A_{g_i} \circ \mathbf{f}$ \emph{full}
for \emph{all} $i \in \{ 1, 2, \ldots, d \}$.
\end{remark}

\begin{Example}
Here we take $\alphabet{X} = \{ x,y \}$,
$\alphabet{Y} = \{ f, p \}$ and abuse notation.
Let $f = (x\inv +y)\inv \in \field{F}_{\alphabet{X}}$,
$p = xy \in \field{F}_{\alphabet{X}}$ and
$g = pfp \in \field{F}_{\alphabet{Y}}$.
Then $h = g \circ (f,p) \in \field{F}_{\alphabet{X}}$
is given by the (minimal) ALS
\begin{displaymath}
\begin{bmatrix}
1 & -x & . & . & . & . \\
. & 1 & -y & . & . & . \\
. & . & 1 & -x & . & . \\
. & . & y & 1 & -x & . \\
. & . & . & . & 1 & -y \\
. & . & . & . & . & 1
\end{bmatrix}
s = 
\begin{bmatrix}
. \\ . \\ . \\ . \\ . \\ 1
\end{bmatrix},
\end{displaymath}
and $\ncpd_x h = \ncpd_x (pfp) = \ncpd_x p \, fp + p\,\ncpd_x f\, p + pf\,\ncpd_x p$ by
\begin{equation}\label{eqn:fd.chain.1}
\left[\!\!
\begin{array}{cccccc|cccccc}
\mtxstrut
1 & -x &  . &  . &  . &  . &  . & -1 &  . &  . &  . &  . \\
. &  1 & -y &  . &  . &  . &  . &  . &  . &  . &  . &  . \\
. &  . &  1 & -x &  . &  . &  . &  . &  . & -1 &  . &  . \\
. &  . &  y &  1 & -x &  . &  . &  . &  . &  . & -1 &  . \\
. &  . &  . &  . &  1 & -y &  . &  . &  . &  . &  . &  . \\
. &  . &  . &  . &  . &  1 &  . &  . &  . &  . &  . &  . \\\hline\mtxstrut
  &    &    &    &    &    &  1 & -x &  . &  . &  . &  . \\
  &    &    &    &    &    &  . &  1 & -y &  . &  . &  . \\
  &    &    &    &    &    &  . &  . &  1 & -x &  . &  . \\
  &    &    &    &    &    &  . &  . &  y &  1 & -x &  . \\
  &    &    &    &    &    &  . &  . &  . &  . &  1 & -y \\
  &    &    &    &    &    &  . &  . &  . &  . &  . &  1
\end{array}
\!\!\right]
s = 
\left[\!\!
\begin{array}{c}
. \\ . \\ . \\ . \\ . \\ . \\\hline\mtxstrut
. \\ . \\ . \\ . \\ . \\ 1
\end{array}
\!\!\right].
\end{equation}
On the other hand,
$\ncpd_x (\ncpd_f g + \ncpd_p g) = \ncpd_x (\ncpd_f g) + \ncpd_x (\ncpd_p g)$
is given by (the admissible system)
\begin{displaymath}
\begin{bmatrix}
1 & -p & . & . & . & -\ncpd_x p & . & . \\
. & 1 & -f & . & . & . & -\ncpd_x f & . \\
. & . & 1 & -p & . & . & . & -\ncpd_x p \\
. & . & . & 1 & . & . & . & . \\
. & . & . & . & 1 & -p & . & . \\
. & . & . & . & . & 1 & -f & . \\
. & . & . & . & . & . & 1 & -p \\
. & . & . & . & . & . & . & 1 
\end{bmatrix}
s = 
\begin{bmatrix}
. \\. \\ . \\ . \\ . \\ . \\ . \\ 1
\end{bmatrix}.
\end{displaymath}
The summands $\ncpd_x p \, fp$ and $pf\,\ncpd_x p$ are easy
to read off in the ALS~\eqref{eqn:fd.chain.1}.
(Alternatively one could add row~8 to row~1 resp.~row~11 to row~4.)
To read off $p\,\ncpd_x f \, p = - p \, (x\inv + y)^{-2} \,p$,
we just need to recall the (minimal) ALS
\begin{displaymath}
\begin{bmatrix}
1 & -x & . & -1 \\
y & 1 & . & . \\
  &   & 1 & -x \\
  &   & y & 1
\end{bmatrix}
s = 
\begin{bmatrix}
. \\ . \\ . \\ 1
\end{bmatrix}.
\end{displaymath}
In other words and with $g \in \field{F}_{\alphabet{Y}}$
given by the ALS $\als{A}_g = (u_g,A_g,v_g)$ of dimension~$n$: $\ncpd_x h$
is given by the \emph{admissible system}
$\ncpd_{\mathbf{y}|\ncpd_x} \als{A}_g$,
\begin{displaymath}
\begin{bmatrix}
A_g & \ncpd_x A_g \\
. & A_g
\end{bmatrix}
s = 
\begin{bmatrix}
. \\ v
\end{bmatrix}
\end{displaymath}
of dimension~$2n$. Notice that $\ncpd_x A_g$ is understood here
in a purely symbolic way, that is, over $\field{F}_{\alphabet{Y} \cup \alphabet{Y}'}$
with additional letters ``$\ncpd_x y$'' in $\alphabet{Y}'$ for each $y\in \alphabet{Y}$.
In this sense $\ncpd_{\mathbf{y}|\ncpd_x} \als{A}_g$ is actually \emph{linear}.
\end{Example}

\begin{proposition}[Free Chain Rule]\label{pro:fd.chainrule}
Let $\alphabet{X} = \{ x_1, x_2, \ldots, x_d \}$,
$\alphabet{Y} = \{ y_1, y_2, \ldots, y_d \}$ and
$\alphabet{Y}' = \{ y'_1, y'_2, \ldots, y'_d \}$
be \emph{pairwise disjoint} alphabets
and fix $x \in \alphabet{X}$.
For $g \in \freeFLD{\field{K}}{\alphabet{Y}}$
given by the ALS $\als{A}_g = (u_g,A_g,v_g)$
and the $d$-tuple
$\mathbf{f}=(f_1,f_2,\ldots,f_d)\in \freeFLD{\field{K}}{\alphabet{X}}^d$
such that $A_g \circ \mathbf{f}$ is \emph{full}.
Denote by $\mathbf{f}'$ the $d$-duple
$(\ncpd_x f_1, \ncpd_x f_2, \ldots, \ncpd_x f_d)$.
Then
\begin{displaymath}
\ncpd_x (g \circ \mathbf{f}) =
  \ncpd_{\mathbf{y}|\mathbf{y}'} g \circ (\mathbf{f},\mathbf{f}')
  =: \ncpd_{\mathbf{y}|\ncpd_x} g \circ \mathbf{f}.
\end{displaymath}
\end{proposition}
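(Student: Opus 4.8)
The plan is to bypass the linearization of Figure~\ref{fig:fd.chain} and differentiate the single rational expression $g \circ \mathbf{f}$ directly, exploiting that $\ncpd_x$ is a \emph{derivation} of $\freeFLD{\field{K}}{\alphabet{X}}$ (Theorem~\ref{thr:fd.ncpd}). Write $A_g = A_g^{(0)}\otimes 1 + \sum_{i=1}^{d} A_g^{(i)}\otimes y_i$ with constant $A_g^{(i)}\in\field{K}^{n\times n}$, so that $A_g\circ\mathbf{f} = A_g^{(0)}\otimes 1 + \sum_{i=1}^{d} A_g^{(i)}\otimes f_i$. By hypothesis $A_g\circ\mathbf{f}$ is full, hence invertible over the field $\freeFLD{\field{K}}{\alphabet{X}}$, and by the definition of the composition (the admissible system $\tilde{\als{A}}_h$) we have $h := g\circ\mathbf{f} = u_g\,(A_g\circ\mathbf{f})\inv v_g$. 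Extended entrywise to matrices over $\freeFLD{\field{K}}{\alphabet{X}}$, the map $\ncpd_x$ remains a derivation with respect to matrix multiplication, so applying it to $M M\inv = I$ gives $\ncpd_x(M\inv) = -M\inv\,\ncpd_x(M)\,M\inv$ for every invertible $M$.

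First I compute the left-hand side. Since $u_g,v_g$ and the $A_g^{(i)}$ are constant, $\ncpd_x(A_g\circ\mathbf{f}) = \sum_{i=1}^{d} A_g^{(i)}\otimes\ncpd_x f_i = \sum_{i=1}^{d} A_g^{(i)}\otimes f_i'$, hence
\[
\ncpd_x h = \ncpd_x\bigl(u_g\,(A_g\circ\mathbf{f})\inv v_g\bigr)
  = -\,u_g\,(A_g\circ\mathbf{f})\inv\Bigl(\textstyle\sum_{i=1}^{d} A_g^{(i)}\otimes f_i'\Bigr)(A_g\circ\mathbf{f})\inv\,v_g .
\]
For the right-hand side, recall that $\ncpd_{\mathbf{y}|\mathbf{y}'}\als{A}_g$ has system matrix, left vector and right vector
\[
\begin{bmatrix} A_g & \sum_{i=1}^{d} A_g^{(i)}\otimes y_i' \\ . & A_g \end{bmatrix},\qquad
\begin{bmatrix} u_g & . \end{bmatrix},\qquad
\begin{bmatrix} . \\ v_g \end{bmatrix}.
\]
Substituting $y_i\mapsto f_i$ and $y_i'\mapsto f_i'=\ncpd_x f_i$ (which leaves the two scalar vectors unchanged) yields the admissible system defining $h' = \ncpd_{\mathbf{y}|\mathbf{y}'} g\circ(\mathbf{f},\mathbf{f}')$; its matrix is block upper triangular with the invertible block $A_g\circ\mathbf{f}$ twice on the diagonal and off-diagonal block $C := \sum_{i} A_g^{(i)}\otimes f_i'$, hence invertible, and the upper-right block of its inverse is $-(A_g\circ\mathbf{f})\inv C (A_g\circ\mathbf{f})\inv$. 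Reading off the first solution component therefore gives $h' = -\,u_g\,(A_g\circ\mathbf{f})\inv C\,(A_g\circ\mathbf{f})\inv\,v_g$, precisely the expression found above for $\ncpd_x h$. This proves $\ncpd_x(g\circ\mathbf{f}) = h' = \ncpd_{\mathbf{y}|\mathbf{y}'} g\circ(\mathbf{f},\mathbf{f}')$.

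Conceptually this is no accident: both $g\mapsto\ncpd_x(g\circ\mathbf{f})$ and $g\mapsto\ncpd_{\mathbf{y}|\mathbf{y}'} g\circ(\mathbf{f},\mathbf{f}')$ are derivations from $\freeFLD{\field{K}}{\alphabet{Y}}$ into $\freeFLD{\field{K}}{\alphabet{X}}$, made a bimodule via $\circ\mathbf{f}$ --- the first because $\ncpd_x$ is a derivation and $\circ\mathbf{f}$ a (partial) homomorphism, the second because $\ncpd_{\mathbf{y}|\mathbf{y}'}=\sum_i\ncpd_{y_i|y_i'}$ is a derivation by Corollary~\ref{cor:fd.linear} and Lemma~\ref{lem:fd.product} --- and they agree on the generating set $\field{K}\cup\alphabet{Y}$ (each kills $\field{K}$ and sends $y_i\mapsto\ncpd_x f_i$); since a derivation out of the universal field of fractions of $\freeALG{\field{K}}{\alphabet{Y}}$ is determined by its values on generators (the inverse rule forces its value on any $u A\inv v$), the two coincide, and the displayed computation is just the explicit form of this. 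Alternatively one can follow the two paths of Figure~\ref{fig:fd.chain} literally: linearize $\tilde{\als{A}}_h$ to an ALS $\als{A}_h$ by enlargement and check that $\ncpd_x\als{A}_h$ and the linearized $\ncpd_{\mathbf{y}|\mathbf{y}'}\als{A}_g\circ(\mathbf{f},\mathbf{f}')$ are admissible systems related by block-row/column swaps and the single-first-column eliminations already used in the proofs of Lemma~\ref{lem:fd.alstoele} and Lemma~\ref{lem:fd.product}; the only work there is the block bookkeeping, which one settles for a single occurrence of a single letter $y_i$ and then extends by additivity over occurrences and over letters. In either version the point demanding care is the standing hypothesis: one must genuinely use that ``$A_g\circ\mathbf{f}$ full'' delivers invertibility of $A_g\circ\mathbf{f}$ (and hence of the triangular matrix $M$) over $\freeFLD{\field{K}}{\alphabet{X}}$, so that differentiating the matrix inverse is legitimate --- this is exactly what makes the composition, and with it the whole identity, meaningful.
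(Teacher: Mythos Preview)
Your argument is correct and complete. The direct computation---applying the derivation $\ncpd_x$ to $u_g(A_g\circ\mathbf{f})\inv v_g$ via the inverse rule $\ncpd_x(M\inv)=-M\inv(\ncpd_x M)M\inv$, and matching the result against the $(1,2)$-block of the inverse of the block-triangular system matrix of $\ncpd_{\mathbf{y}|\mathbf{y}'}\als{A}_g\circ(\mathbf{f},\mathbf{f}')$---settles the identity in a few lines. Your use of the hypothesis is exactly right: over the skew field $\freeFLD{\field{K}}{\alphabet{X}}$ fullness of a square matrix coincides with invertibility, so both $A_g\circ\mathbf{f}$ and the $2n\times 2n$ triangular matrix are invertible and the expressions you write down make sense.

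This is, however, a genuinely different route from the paper. The paper never inverts $A_g\circ\mathbf{f}$ symbolically; instead it stays entirely on the level of admissible (linear) systems. It \emph{linearizes} the composition by enlargement---replacing each scalar entry $\alpha_0+\sum_i\alpha_i f_i$ of $A_g\circ\mathbf{f}$ by a block built from the individual $\als{A}_{f_i}$---and then shows, by explicit block row/column operations, that the ALS $\ncpd_x(\als{A}_g\circ\als{A}_{\mathbf{f}})$ coincides with the linearized form of $\ncpd_{\mathbf{y}|\mathbf{y}'}\als{A}_g\circ(\als{A}_{\mathbf{f}},\als{A}_{\mathbf{f}'})$. (Your final paragraph in fact sketches this route.) What the paper's approach buys is constructivity: it exhibits the concrete admissible transformations relating the two linear representations, which is in keeping with the algorithmic spirit of the paper and useful if one actually wants to compute with the resulting ALS. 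What your approach buys is brevity and transparency: once Theorem~\ref{thr:fd.ncpd} is in hand, the chain rule is a two-line calculation with matrix inverses, and the conceptual ``two derivations agreeing on generators'' remark makes clear why no accident is involved.
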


\begin{proof}
For $i \in \{ 1, 2, \ldots, d \}$ let
$f_i \in \field{F}_{\alphabet{X}}$ be given by
the admissible linear systems $\als{A}_{f_i} = (u_{f_i},A_{f_i},v_{f_i})$
respectively. In the following we assume ---without loss of generality---
$d=3$, and decompose $A_g \circ \mathbf{f}$ into
$\bigl[\begin{smallmatrix} A_{11} & A_{12} \\ A_{12} & a \end{smallmatrix}\bigr]$
of size~$n$
with $a = \alpha_0 + \alpha_1 f_1 + \alpha_2 f_2 + \alpha_3 f_3$.
A generalization of the ``linearization by enlargement''
\cite[Section~5.8]{Cohn2006a}
\ is then to start with the \emph{full} matrix
$A_g \circ \mathbf{f} \oplus A_{f_1} \oplus \ldots \oplus A_{f_d}$,
add \raisebox{0pt}[0pt][0pt]{$u_{f_i} A_{f_i}\inv$}
from the corresponding block row to row~$n$, and 
$-\alpha_i A_{f_i}\inv v_{f_i}$ from the corresponding
block column to column~$n$ in the upper left block
(these transformations preserve fullness):
\begin{displaymath}
\begin{bmatrix}
A_{11} & A_{12} & . & . & . \\
A_{21} & \alpha_0 & u_{f_1} & u_{f_2} & u_{f_3} \\
. & -\alpha_1 v_{f_1} & A_{f_1} & . & . \\
. & -\alpha_2 v_{f_2} & . & A_{f_2} & . \\
. & -\alpha_3 v_{f_3} & . & . & A_{f_3}
\end{bmatrix}.
\end{displaymath}
A \emph{partially} linearized system matrix for
$\ncpd_x (\als{A}_g \circ \mathbf{f})$ is 
\begin{equation}\label{eqn:fd.comp.1}
\left[\!\!
\begin{array}{ccccc|ccccc}
\mtxstrut
A_{11} & A_{12} & . & . & . & * & * & . & . & . \\
A_{21} & \alpha_0 & u_{f_1} & u_{f_2} & u_{f_3} & * & 0 & 0 & 0 & 0 \\
. & \makebox[2.2em]{$-\alpha_1 v_{f_1}$} & A_{f_1} & . & .
  & . & 0 & \makebox[2.9em]{$A_{f_1}^x\otimes 1$} & . & . \\
. & \makebox[2.2em]{$-\alpha_2 v_{f_2}$} & . & A_{f_2} & .
  & . & 0 & . & \makebox[2.9em]{$A_{f_2}^x\otimes 1$} & . \\
. & \makebox[2.2em]{$-\alpha_3 v_{f_3}$} & . & . & A_{f_3}
  & . & 0 & . & . & \makebox[2.9em]{$A_{f_3}^x\otimes 1$} \\\hline\mtxstrut
 & & & & & A_{11} & A_{12} & . & . & . \\
 & & & & & A_{21} & \alpha_0 & u_{f_1} & u_{f_2} & u_{f_3} \\
 & & & & & . & \makebox[2.1em]{$-\alpha_1 v_{f_1}$} & A_{f_1} & . & . \\
 & & & & & . & \makebox[2.1em]{$-\alpha_2 v_{f_2}$} & . & A_{f_2} & . \\
 & & & & & . & \makebox[2.1em]{$-\alpha_3 v_{f_3}$} & . & . & A_{f_3}
\end{array}
\!\!\right].
\end{equation}
On the other hand, the system matrix of $\ncpd_{y_i|y_i'} \als{A}_g$ is
\begin{displaymath}
\begin{bmatrix}
A_{11} & A_{12} & A_{11}^{y_i}\otimes y_i' & A_{12}^{y_i}\otimes y_i' \\
A_{21} & a & A_{21}^{y_i}\otimes y_i' & \alpha_i y_i' \\
 & & A_{11} & A_{12} \\
 & & A_{21} & a
\end{bmatrix},
\end{displaymath}
the corresponding \emph{partially} linearized system matrix
of $(\ncpd_{y_i|y_i'} \als{A}_g)\circ (\mathbf{f},f_i')$ is
\begin{displaymath}
\left[\!\!
\begin{array}{ccccc|ccccc}
\mtxstrut
A_{11} & A_{12} & . & . & . & A_{11}^{y_i}\otimes f_i' & A_{12}^{y_i}\otimes f_i' & . & . & . \\
A_{21} & \alpha_0 & u_{f_1} & u_{f_2} & u_{f_3} & A_{21}^{y_i}\otimes f_i' & \fbox{$\alpha_i f_i'$} & . & . & . \\
. & -\alpha_1 v_{f_1} & A_{f_1} & . & . & . & . & . & . & . \\
. & -\alpha_2 v_{f_2} & . & A_{f_2} & . & . & . & . & . & . \\
. & -\alpha_3 v_{f_3} & . & . & A_{f_3} & . & . & . & . & . \\\hline\mtxstrut
 & & & & & A_{11} & A_{12} & . & . & . \\
 & & & & & A_{21} & \alpha_0 & u_{f_1} & u_{f_2} & u_{f_3} \\
 & & & & & . & -\alpha_1 v_{f_1} & A_{f_1} & . & . \\
 & & & & & . & -\alpha_2 v_{f_2} & . & A_{f_2} & . \\
 & & & & & . & -\alpha_3 v_{f_3} & . & . & A_{f_3}
\end{array}
\!\! \right].
\end{displaymath}
To eliminate the boxed entry $\alpha_i f_i'$ we just need to recall
the derivative $\ncpd_x \als{A}_{f_i}$ of $\als{A}_{f_i}=(u_{f_i},A_{f_i},v_{f_i})$,
the invertible matrix $Q$ swaps block columns~1 and~2:
\begin{displaymath}
\left[\!\!
\begin{array}{c|cc}
0 & u_{f_i} & . \\\hline\mtxstrut
. & A_{f_i} & A_{f_i}^x \otimes 1 \\
-\alpha_i v_{f_i} & . & A_{f_i}
\end{array}
\!\!\right]
Q =
\begin{bmatrix}
u_{f_i} & \fbox{$0$} & . \\
A_{f_i} & . & A_{f_i}^x \otimes 1 \\
. & -\alpha_i v_{f_i} & A_{f_i}
\end{bmatrix}.
\end{displaymath}
Thus, after summing up (over $i \in \{ 1, 2, \ldots, d \}$),
we get the system matrix \eqref{eqn:fd.comp.1}
and hence
\begin{displaymath}
\ncpd_x \als{A}_h = \ncpd_x (\als{A}_g \circ \als{A}_f) =
  \ncpd_{\mathbf{y}|\mathbf{y}'} A_{g} \circ \als{A}_{\mathbf{f},\mathbf{f}'},
\end{displaymath}
that is, $\ncpd_x h = \ncpd_x (g \circ \mathbf{f})
  = \ncpd_{\mathbf{y}|\ncpd_x \mathbf{f}} g \circ \mathbf{f}$.
\end{proof}

\newpage
Free (non-commutative) \emph{de}composition is important in
\emph{control theory}
\cite[Section~6.2.2]{deOliveira2006a}
:
``The authors do not know how to fully implement the decompose operation.
Finding decompositions by hand can be facilitated with the use of certain
type of collect command.''

Let $g = f a b f + c f + d e$ and $f = x y + z$,
that is, $f$ solves a Riccati equation.
Given $h = g \circ f = (xy+z)ab(xy+z) + c(xy+z) +de$ by
the (minimal) admissible \emph{linear} system
$\als{A}_h=(u_h,A_h,v_h)$,
\begin{displaymath}
\begin{bmatrix}
1 & -x & -z & . & -d & -c & . & . \\
. & 1 & -y & . & . & . & . & . \\
. & . & 1 & -a & . & . & . & . \\
. & . & . & 1 & . & -b & . & . \\
. & . & . & . & 1 & . & . & -e \\
. & . & . & . & . & 1 & -x & -z \\
. & . & . & . & . & . & 1 & -y \\
. & . & . & . & . & . & . & 1 
\end{bmatrix}
s = 
\begin{bmatrix}
. \\ . \\ . \\ . \\ . \\ . \\ . \\ 1
\end{bmatrix},
\end{displaymath}
one can read off $f = xy+z$ \emph{directly} since
$A_h$ has the form
\begin{displaymath}
\begin{bmatrix}
1 & \fbox{$-f$} & . & -d & -c & . \\
. & 1 & -a & . & . & . \\
. & . & 1 & . & -b & . \\
. & . & . & 1 & . & -e \\
. & . & . & . & 1 & \fbox{$-f$} \\
. & . & . & . & . & 1
\end{bmatrix}
s = 
\begin{bmatrix}
. \\ . \\ . \\ . \\ . \\ 1
\end{bmatrix}.
\end{displaymath}
So, starting with a \emph{minimal} ALS $\tilde{\als{A}}_h$ for $h$
one ``just'' needs to find appropriate (invertible)
transformation matrices $P,Q$ such that $\als{A}_h = P \tilde{\als{A}}_h Q$
using (commutative) Gröbner bases techniques
similar to \cite[Theorem~4.1]{Cohn1999a}
\ or the refinement of pivot blocks
\cite[Section~3]{Schrempf2018a9}
. Although this is quite challenging using brute force
methods, in practical examples it is rather easy using
\emph{free fractions}, that is, \emph{minimal} and \emph{refined}
admissible linear systems, as a ``work bench'' where one
can perform the necessary row and column operations manually.

``The challenge to computer algebra is to start with an expanded version of
$h = g \circ f$, which is a mess that you would likely see in a computer
algebra session, and to automatically motivate the selection of $f$.''
\cite[Section~6.2.1]{deOliveira2006a}

Working with \emph{free fractions} is simple, in particular
with nc polynomials. The main difficulty in working with (finite)
formal power series is that the number of words can grow exponentially
with respect to the \emph{rank}, the minimal dimension of a linear
representation \cite[Table~1]{Schrempf2019a}
. In this case, for
\begin{displaymath}
h = g \circ f =  x y a b x y + x y a b z + z a b x y + z a b z + c x y + c z + d e,
\end{displaymath}
the main ``structure'' becomes (almost) visible already after
\emph{minimization} of the corresponding ``polynomial''
admissible linear system (in upper triangular form with ones in the diagonal).

\newpage
\section{Application: Newton Iteration}\label{sec:fd.newton}

To compute the third root $\sqrt[3]{z}$ of a (positive) real number $z$,
say $z=2$,
we just have to find the roots of the polynomial $p = x^3 - z$,
for example by using the Newton iteration
$x_{k+1} := x_k - p(x_k)/p'(x_k)$, that 
---given a ``good'' starting value $x_0$--- yields
a (quadratically) convergent sequence $x_0, x_1, x_2, \ldots$
such that $\lim_{k \to \infty} x_k^3 = z$.
Using \textsc{FriCAS} \cite{FRICAS2019}
, the first 6~iterations for $x_0=1$ are
\begin{center}
\begin{tabular}{rd{8}d{16}}
$k$ & \multicolumn{1}{c}{$\lvert x_k - x_{k-1}\rvert$}
     & \multicolumn{1}{c}{$x_k$} \\\hline\tabstrut
1 & 3.333\cdot 10^{-1} & 1.3333333333333333 \\
2 & 6.944\cdot 10^{-2} & 1.2638888888888888 \\
3 & 3.955\cdot 10^{-3} & 1.259933493449977 \\
4 & 1.244\cdot 10^{-5} & 1.2599210500177698 \\
5 & 1.229\cdot 10^{-10} & 1.2599210498948732 \\
6 & 0 & 1.2599210498948732
\end{tabular}
\end{center}
Detailed discussions are available in every book on
numerical analysis, for example
\cite{Henrici1964a}
. As a starting point towards current research,
one could take \cite{Schleicher2017a}
.

\medskip
Now we would like to compute the third root $\sqrt[3]{Z}$
of a real (square) matrix $Z$ (with positive eigenvalues).
We (still) can use 
$X_{k+1} := \tsfrac{2}{3} X_k + \tsfrac{1}{3} X_k^{-2} Z$
if we choose a starting matrix $X_0$
such that $X_0 Z = Z X_0$ 
because then all $X_k$ \emph{commute} with $Z$
\cite[Section~7.3]{Higham2008a}
.
For $X_0=I$ and
\begin{displaymath}
Z =
\begin{bmatrix}
47 & 84 & 54 \\
42 & 116 & 99 \\
9 & 33 & 32
\end{bmatrix}
\quad
\text{respectively}
\quad
\sqrt[3]{Z} = 
\begin{bmatrix}
3 & 2 & 0 \\
1 & 4 & 3 \\
0 & 1 & 2
\end{bmatrix},
\end{displaymath}
some iterations are (with $\fronorm{.}$ denoting the Frobenius norm) 
\begin{center}
\begin{tabular}{rd{8}d{8}}
$k$ & \multicolumn{1}{c}{$\lVert X_k - X_{k-1}\rVert_{\text{F}}$}
    & \multicolumn{1}{c}{$\lVert X_k - \sqrt[3]{Z} \rVert_{\text{F}}$} \\\hline\tabstrut
4 & 9.874 & 23.679 \\
5 & 6.511 & 13.818 \\
6 & 4.109 & 7.309 \\
7 & 2.254 & 3.200 \\
8 & 8.295\cdot 10^{-1} & 9.455\cdot 10^{-1} \\
9 & 1.140\cdot 10^{-1} & 1.160\cdot 10^{-1} \\
10 & 2.044\cdot 10^{-3} & 2.044\cdot 10^{-3} \\
11 & 1.115\cdot 10^{-6} & 6.489\cdot 10^{-7}
\end{tabular}
\end{center}
Does this sequence of matrices $X_0,X_1,\ldots$ converge?
Some more iterations reveal immediately that something goes wrong,
visible in Table~\ref{tab:fd.newton}.
The problem is that due to rounding errors (in finite precision
arithmetics) commutativity of $X_k$ (with $Z$) is lost,
that is, $\fronorm{X_k Z - Z X_k}$ \emph{increases} with every
iteration. This happens even for a starting value close
to the solution $X_0 = \sqrt[3]{Z} + \eps I$.
For a detailled discussion of matrix roots
and how to overcome such problems
we refer to \cite[Section~7]{Higham2008a}
, for further information about numerics
to \cite{Demmel1997a}
. A classical introduction to matrix functions
is \cite[Chapter~V]{Gantmacher1965a}
. For the (matrix) square root and discussions about
the stability (of Newton's method) we recommend
\cite{Higham1986b}
.

\begin{table}[ht]
\begin{center}
\begin{tabular}{rd{8}d{8}d{8}}
$k$ & \multicolumn{1}{c}{$\fronorm{X_k - X_{k-1}}$}
    & \multicolumn{1}{c}{$\fronorm{X_k - \sqrt[3]{Z}}$}
    & \multicolumn{1}{c}{$\fronorm{X_k Z - Z X_k}$} \\\hline\tabstrut
1 & 65.836 & 5.385   & 2.274\cdot 10^{-13}\\
2 & 22.279 & 60.756  & 7.541\cdot 10^{-13}\\
3 & 14.845 & 38.500  & 1.110\cdot 10^{-12}\\
4 & 9.874 & 23.679   & 2.031\cdot 10^{-12}\\
5 & 6.511 & 13.818   & 7.725\cdot 10^{-12}\\
6 & 4.109 & 7.309    & 7.010\cdot 10^{-11}\\
7 & 2.254 & 3.200    & 9.451\cdot 10^{-10}\\
8 & 8.295\cdot 10^{-1} & 9.455\cdot 10^{-1} & 1.716\cdot 10^{-8}\\
9 & 1.140\cdot 10^{-1} & 1.160\cdot 10^{-1} & 3.548\cdot 10^{-7}\\
10 & 2.044\cdot 10^{-3} & 2.044\cdot 10^{-3} & 7.473\cdot 10^{-6}\\
11 & 1.115\cdot 10^{-6} & 6.489\cdot 10^{-7} & 1.574\cdot 10^{-4}\\
12 & 1.979\cdot 10^{-5} & 8.971\cdot 10^{-7} & 3.314\cdot 10^{-3}\\
13 & 4.168\cdot 10^{-4} & 1.890\cdot 10^{-5} & 6.978\cdot 10^{-2}\\
14 & 8.777\cdot 10^{-3} & 3.979\cdot 10^{-4} & 1.469 \\
15 & 1.848\cdot 10^{-1} & 8.379\cdot 10^{-3} & 3.094\cdot 10^{+1}\\
16 & 3.892              & 1.764\cdot 10^{-1} & 6.515\cdot 10^{+2}\\
17 & 8.195\cdot 10^{+1}   & 3.715 & 1.372\cdot 10^{+4}\\
18 & 1.726\cdot 10^{+3} & 7.823\cdot 10^{+1} & 2.889\cdot 10^{+5}
\end{tabular}
\end{center}
\vspace{-2ex}
\caption{The first 18 Newton iterations to find $\sqrt[3]{Z}$
for $X_0=I$. The values in column~2 (and~3) for the first 11 Newton
steps seem to indicate convergence. However, the (Frobenius) norm
of the \emph{commutator} $X_k Z - Z X_k$ (column~4) increases steadily and
that causes eventually a \emph{diverging} sequence $(X_k)$.
}
\label{tab:fd.newton}
\end{table}

\begin{remark}
To ensure commutation of the iterates $X_k$ with $Z$
one could use an additional correction step solving
the Sylvester equation $Z \Delta X_k - \Delta X_k Z= X_k Z - Z X_k$
for an update $\Delta X_k$.
Since this is expensive the benefit of using
the ``commutative'' Newton iteration would be lost.
For fast solutions of Sylvester equations we
refer to \cite{Kirrinnis2001a}
.
\end{remark}

\medskip
Although what we are going to introduce now
as ``\emph{non-commutative} (nc) \emph{Newton iteration}'' is even
more expensive, it can be implemented as \emph{black box} algorithm,
that is, \emph{without} manual computation of the non-commutative
derivative and any individual implementation/programming.
Furthermore, there is absolutely no restriction for the
initial iterate.

\medskip
To get the nc Newton method for solving $f(x)=0$ we just need to
``truncate'' the Hausdorff \emph{polarization operator}
\cite[Section~4]{Rota1980a}
\begin{displaymath}
f(x+b) = f(x) + \ncpd_{x|b} f(x)
  + \tsfrac{1}{2}\ncpd_{x|b}\bigl(\ncpd_{x|b} f(x) \bigr) + \ldots
\end{displaymath}
as analogon to Taylor's formula. Thus, for $f(x) = x^3 - z$ we
have to solve
\begin{displaymath}
0 = \underbrace{x^3 - z}_{f} + \underbrace{bx^2 + xbx + x^2b}_{\ncpd_{x|b} f}
    \quad \in \freeFLD{\numR}{\alphabet{X}} =: \field{F}
\end{displaymath}
with respect to $b$. In terms of (square) matrices $X,Z,B$ this
is just the generalized Sylvester equation
$B X^2 + X B X + X^2 B = Z - X^3$
which is \emph{linear} in $B$
\cite[Section~7.2]{Higham2008a}
. Taking a (not necessarily with $Z$ commuting) starting matrix $X_0$,
we can compute $B_0$ and get $X_1 := X_0 + B_0$ and iteratively
$X_{k+1} := X_k + B_k$.

Minimal admissible linear systems for $f=x^3-z$ and
$\ncpd_{x|b} f=bx^2 + xbx + x^2b$ are given by
\vspace{-3ex}
\begin{displaymath}
\begin{bmatrix}
1 & -x & . & z \\
. & 1 & -x & . \\
. & . & 1 & -x \\
. & . & . & 1
\end{bmatrix}
s =
\begin{bmatrix}
. \\ . \\ . \\ 1
\end{bmatrix}
\quad\text{and}\quad
\begin{bmatrix}
1 & -x & . & -b & . & . \\
. & 1 & -x & . & -b & . \\
. & . & 1 & . & . & -b \\
. & . & . & 1 & -x & . \\
. & . & . & . & 1 & -x \\
. & . & . & . & . & 1
\end{bmatrix}
s =
\begin{bmatrix}
. \\ . \\ . \\ . \\ . \\ 1
\end{bmatrix}
\end{displaymath}
respectively. A \emph{minimal} ALS 
$\als{A}=(u,A,v)$ of dimension $n=6$
for $g = f+ \ncpd_{x|b}f$ is immediate:
\begin{displaymath}
\begin{bmatrix}
1 & -x & . & -b & . & z \\
. & 1 & -x & . & -b & . \\
. & . & 1 & . & . & -b-x \\
. & . & . & 1 & -x & . \\
. & . & . & . & 1 & -x \\
. & . & . & . & . & 1
\end{bmatrix}
s =
\begin{bmatrix}
. \\ . \\ . \\ . \\ . \\ 1
\end{bmatrix}.
\end{displaymath}
Since the system matrix is just a linear matrix pencil
$A = A_1 \otimes 1 + A_b \otimes b + A_x \otimes x + A_z \otimes z$
we can plug in $m \times m$ matrices using the Kronecker product
\begin{displaymath}
A(B,X,Z) 
  = A_1 \otimes I + A_b \otimes B + A_x \otimes X + A_z \otimes Z.
\end{displaymath}
In this case, for $v = e_n = [0, \ldots, 0, 1]\trp$,
the evaluation of $g: M_m(\numR)^3 \to M_m(\numR)$
is just the upper right $m \times m$ block of $A(B,X,Z)\inv$.
For how to efficiently evaluate (nc) polynomials by matrices
using \emph{Horner systems} we refer to
\cite{Schrempf2019a}
.

Here, $A(B,X,Z)$ is invertible for arbitrary $B$, $X$ and $Z$.
Given $Z$ and $X$,
to find a $B$ such that $g(B,X,Z)=0$
we have to look for transformation
matrices $P=P(T_{ij})$ and $Q=Q(U_{ij})$ of the form
\begin{displaymath}
P =
\begin{bmatrix}
I & . & . & T_{1,1} & T_{1,2} & 0 \\
. & I & . & T_{2,1} & T_{2,2} & 0 \\
. & . & I & T_{3,1} & T_{3,2} & 0 \\
. & . & . & I & . & . \\
. & . & . & . & I & . \\
. & . & . & . & . & I
\end{bmatrix},
\quad
Q =
\begin{bmatrix}
I & . & . & 0 & 0 & 0 \\
. & I & . & U_{2,1} & U_{2,2} & U_{2,3} \\
. & . & I & U_{3,1} & U_{3,2} & U_{3,3} \\
. & . & . & I & . & . \\
. & . & . & . & I & . \\
. & . & . & . & . & I
\end{bmatrix}
\end{displaymath}
such that the upper right block of size $3m \times 3m$ of
$P A(B,X,Z) Q$ is zero, that is,
we need to solve a \emph{linear} system of equations
(with $6m^2+6m^2+m^2 = 13m^2$ unknowns)
similar to the word problem
\cite[Theorem~2.4]{Schrempf2017a9}
. Table~\ref{tab:fd.ncnewton} shows the nc
Newton iterations for the starting matrix
\begin{equation}\label{eqn:fd.ncnewton.x0}
X_0 =
\begin{bmatrix}
1 & 0 & 2 \\
0 & 1 & 0 \\
0 & 0 & 1
\end{bmatrix}
\end{equation}
using \textsc{FriCAS} \cite{FRICAS2019}
\ and the least squares solver DGELS from
\cite{LAPACK2018}
.

\begin{table}
\begin{center}
\begin{tabular}{rd{8}d{8}d{8}}
$k$ & \multicolumn{1}{c}{$\fronorm{B_k}$}
    & \multicolumn{1}{c}{$\fronorm{X_k - \sqrt[3]{Z}}$}
    & \multicolumn{1}{c}{$\fronorm{X_k Z - Z X_k}$} \\\hline\tabstrut
 0 & 46.877 &  5.745 &  113.842 \\
 1 & 16.081 & 42.298 & 5552.242 \\
 2 & 10.768 & 26.374 & 3659.788 \\
 3 &  7.320 & 15.912 & 2395.971 \\
 4 &  4.971 &  9.358 & 1534.892 \\
 5 &  2.934 &  6.122 & 912.700 \\
 6 &  2.651 &  4.389 & 414.201 \\
 7 &  1.380 &  1.846 & 86.771 \\
 8 & 3.878\cdot 10^{-1}  & 4.875\cdot 10^{-1} & 5.638 \\
 9 & 9.378\cdot 10^{-2}  & 1.023\cdot 10^{-1} & 2.652\cdot 10^{-1} \\
10 & 8.432\cdot 10^{-3}  & 8.506\cdot 10^{-3} & 6.737\cdot 10^{-3} \\
11 & 7.407\cdot 10^{-5}  & 7.408\cdot 10^{-5} & 1.951\cdot 10^{-5} \\
12 & 5.895\cdot 10^{-9}  & 5.895\cdot 10^{-9} & 5.106\cdot 10^{-10} \\
13 & 1.825\cdot 10^{-14} & 1.521\cdot 10^{-14} & 1.491\cdot 10^{-12}
\end{tabular}
\end{center}
\vspace{-2ex}
\caption{The first 13 (nc) Newton iterations to find $\sqrt[3]{Z}$
  for $X_0$ from \eqref{eqn:fd.ncnewton.x0}.
  In the beginning, the iterates $X_k$ do not commute with $Z$
  (column~4).
  }
\label{tab:fd.ncnewton}
\end{table}

\medskip
In general, depending on the initial iterate $X_0$,
there is \emph{no} guarantee that one ends up in some
prescribed solution $X$ since there can be several
(matrix) roots.
For the \emph{principal} $p$-th root (and further discussion)
we refer to \cite[Theorem~7.2]{Higham2008a}
.
For a discussion of Taylor's theorem (for matrix functions)
one should have a look in 
\cite{Deadman2016a}
.

\medskip
The goal of this section was mainly for illustration
(of the use of the free derivative).
How could we attack analysis of convergence (for special classes
of rational functions) in this context?
``Classical'' interval Newton is discussed in
\cite[Chapter~6.1]{Rump2010a}
. What's about ``nc interval Newton''?
The next natural step would be \emph{multi-dimensional} nc Newton,
say to find a root $G(X,Y,Z)=0$ for
\begin{displaymath}
G =
\begin{bmatrix}
g_1 \\ g_2 \\ g_3
\end{bmatrix}
=
\begin{bmatrix}
x^3 + z -d \\
\bigl(x(1-yx)\bigr)\inv -e \\
xzx - yz + z^2 - f
\end{bmatrix}
\end{displaymath}
with (matrix-valued) parameters $d,e,f$.
Although very technical, one can set up a ``joint'' linear
system of equations to solve
\begin{displaymath}
\begin{pmatrix}
g_1 + \ncpd_{x\mid a} g_1 & g_1 + \ncpd_{y\mid b} g_1 & g_1 + \ncpd_{z\mid c} g_1 \\
g_2 + \ncpd_{x\mid a} g_2 & g_2 + \ncpd_{y\mid b} g_2 & g_2 + \ncpd_{z\mid c} g_2 \\
g_3 + \ncpd_{x\mid a} g_3 & g_3 + \ncpd_{y\mid b} g_3 & g_3 + \ncpd_{z\mid c} g_3
\end{pmatrix} = 0
\end{displaymath}
for matrices $A$, $B$ and $C$
using the previous approach to find tuples of ``individual''
transformation matrices $P_{ij}$ and $Q_{ij}$ to create
respective upper right blocks of zeros.
The problem however is, that this system is \emph{overdetermined}
in general, and starting arbitrary close to a root
leads to a residual that causes divergence.
How can one overcome that?

\section*{Acknowledgement}

I am very grateful to Tobias Mai for making me aware of cyclic derivatives
respectively the work of Rota, Sagan and Stein
\cite{Rota1980a}
\ in May 2017 in Graz and for a
discussion in October 2018 in Saarbrücken.
I also use this opportunity to thank Soumyashant Nayak
for continuous ``non-commutative'' discussions,
in particular about free associative algebras
and (inner) derivations.
And I thank the referees for the feedback and a hint on the
literature.

\bibliographystyle{alpha}
\bibliography{doku}
\addcontentsline{toc}{section}{Bibliography}

\end{document}